\newcommand{\s}[1]{{\mathcal #1}}
\newcommand{\bb}[1]{{\mathbb #1}}
\newcommand{\vd}[2]{\dfrac{\delta #1}{\delta #2}}
\newcommand{\ip}[2]{\left\langle #1,#2 \right\rangle}
\newcommand{\firststep}{\setcounter{step}{1}\textbf{Step \arabic{step}.} }
\newcommand{\nextstep}{\stepcounter{step}\textbf{Step \arabic{step}.} }
\DeclareMathOperator{\argmin}{argmin}
\newcommand{\vphi}{{\varphi}}
\DeclareMathOperator{\spt}{spt}
\DeclareMathOperator{\range}{range}
\newcommand{\sP}{{\mathscr P}}
\newcommand{\bH}{{\mathbb H}}
\newcommand{\R}{{\mathbb R}}
\newcommand{\ds}{\displaystyle}
\newtheorem{theorem}{Theorem} 
\newtheorem{corollary}[theorem]{Corollary}
\newtheorem{lemma}[theorem]{Lemma}
\newtheorem{definition}[theorem]{Definition}
\newtheorem{remark}[theorem]{Remark}
\newtheorem{assumption}[theorem]{Assumption}
\numberwithin{equation}{section}
\numberwithin{theorem}{section}
\newcounter{step}
\begin{document}
	
	\title[Mean Field Games and Conservation Laws]{On Some Mean Field Games and Master Equations through the lens of conservation laws}
	
	\author{P.~Jameson Graber}
	\address{J. Graber: Baylor University, Department of Mathematics;\\
		Sid Richardson Building\\
		1410 S.~4th Street\\
		Waco, TX 76706\\
		Tel.: +1-254-710- \\
		Fax: +1-254-710-3569 
	}
	\email{Jameson\_Graber@baylor.edu}
	
	\author{Alp\'ar R. M\'esz\'aros}
	\address{A.R. M\'esz\'aros: Department of Mathematical Sciences, Durham University, Durham DH1 3LE, United Kingdom}
	\email{alpar.r.meszaros@durham.ac.uk} 
	
	\date{\today}   
	
	\begin{abstract}
		In this manuscript we derive a new nonlinear transport equation written on the space of probability measures that allows to study {a class of} deterministic mean field games and master equations, {where the interaction of the agents happens only at the terminal time}. The point of view via this transport equation has two important consequences. First, this equation reveals a new monotonicity condition that is sufficient both for the uniqueness of MFG Nash equilibria and for the global in time well-posedness of master equations. Interestingly, this condition is in general in dichotomy with both the Lasry--Lions and displacement monotonicity conditions, studied so far in the literature. 
		Second, in the absence of monotonicity, the conservative form of the transport equation can be used to define weak entropy solutions to the master equation. We construct several concrete examples to demonstrate that MFG Nash equilibria, whether or not they actually exist, may not be selected by the entropy solutions of the master equation.
	\end{abstract}
	
	\keywords{mean field games; master equation; new monotonicity condition; conservation laws; entropy solutions}

	\maketitle
	
	
	\section{Introduction}
	
The history of game theoretical models with an infinite number or continuum of agents in the economical literature dates back to the 1960's, and these were first investigated independently by Aumann and Shapley (see \cite{aumann64} and the references therein). Roughly half a century later, two groups, Lasry--Lions (cf. \cite{lasry06,lasry06a,lasry07}) and Huang--Malham\'e--Caines (cf. \cite{huang2006large}) were interested in characterizing limits of Nash equilibria of stochastic differential games, when the number of agents tends to infinity. These studies gave birth to the theory of \emph{mean field games} (MFG), which in the past 15 years or so has found a great number of applications in many different fields and initiated profound mathematical research. 
	
There are two main approaches when it comes to rigorously study MFGs: one uses probabilistic tools and often relies on studying systems of FBSDEs, while the other is an analytic one which is based on the study of PDE (or SPDE) systems. At this level, the MFG system consists of a Hamilton--Jacobi--Bellman (HJB) equation, written for the value function of a typical agent, and a Fokker--Planck equation describing the evolution of the density of the agent population. For an excellent introduction to the two approaches we refer to \cite{carmona2018probabilistic} and \cite{cardaliaguet2020intro}, respectively.
	
A fundamental object in this theory is the \emph{master equation}, introduced by Lions in \cite{lions07}. This is a nonlinear and nonlocal PDE set on the space of probability measures that encodes all the information about the game. This equation has a great significance in rigorously showing that Nash equilibria of $N$-player stochastic differential games converge as $N\to+\infty$ to their mean field limit (cf. \cite{cardaliaguet2019master}). In particular, smooth solutions to the master equation will also provide quantified rates of convergence. The question of well-posedness of the master equation initiated an important program in the field. Local in time classical solutions to the master equation are known to exist and be unique under almost no structural assumptions on the data, even in the presence of common noise or in completely deterministic settings (\cite{gangbo2015existence,mayorga2019short,carmona2018probabilisticII,ambrose2021well,cardaliaguet2022splitting}). The question of global in time well-posedness of the master equation is more subtle. From the point of view of classical solutions this is known under additional {\it monotonicity} assumptions on the data. Roughly speaking, such assumptions prevent the crossing of generalized characteristics, and as a result the well-posedness of the master equation can be obtained. As the MFG system plays the role of generalized characteristics for the master equation, the existence of classical solutions to the latter one is intimately linked to the question of uniqueness of solutions to the former one.

\noindent {\bf Master equations in the known monotone regimes.} In the current literature we can distinguish essentially two directions: the so-called {\it Lasry--Lions} (LL) monotonicity condition, which corresponds to `flat interpolation' of probability measures (the geometry associated to the $W_1$ Wasserstein metric), and the so-called {\it displacement monotonicity} condition that stems from the notion of displacement convexity arising in the theory of optimal transport; this latter one corresponds to interpolations of probability measures along $W_2$-geodesics. Historically, the LL monotonicity condition appeared first in the context of MFG, this being also the first sufficient condition for uniqueness of (regular enough) solutions to the MFG system (cf. \cite{lasry07}). In the context of the master equation, the first fundamental global in time well-posedness results (\cite{chassagneux, cardaliaguet2019master,carmona2018probabilisticII}) relied on the LL monotonicity condition in the case of so-called separable Hamiltonians (the momentum and measure variables being additively separated) and non-degenerate idiosyncratic noise. This monotonicity condition (in the case of separable Hamiltonians) was used later in \cite{bertucci2021monotone, mou2019wellposedness} to define various notions of weak solutions to the master equation. In these references the underlying MFG system still had unique solutions. Displacement monotonicity conditions in context of MFG have first been used in \cite{ahuja2016wellposedness} (see also \cite{ahuja2019forward, chassagneux, carmona2018probabilisticII}). However, it became evident only in the works \cite{gangbo2020global,gangbo2021mean, bensoussan2019stochastic, bensoussan2020control, meszaros2021mean} that displacement monotonicity could serve as an alternative sufficient condition for the global in time well-posedness of master equations not only in the absence of non-degenerate idiosyncratic noise (cf. \cite{gangbo2020global}), but in addition in the case of a general class of non-separable Hamiltonians (cf. \cite{gangbo2021mean, meszaros2021mean}). In \cite{gangbo2020global} it has also been pointed out that displacement monotonicity is in general in dichotomy with the LL monotonicity condition.
	
At this point, let us emphasize that both the LL and displacement monotonicity conditions are only sufficient conditions in the corresponding cases both for the uniqueness of (regular enough) solutions to MFG systems and for the global in time existence of classical solutions to the corresponding master equations. Any alternative condition that could ensure uniqueness and stability of solutions to the MFG system could potentially imply the corresponding global in time well-posedness theory for the underlying master equations. In this manuscript, one of our main objectives is to propose a {\it new monotonicity condition}, which is yet another sufficient condition both for the uniqueness of solutions to MFG systems and the global in time well-posedness of the corresponding master equations. 
	We do { this for a class of deterministic MFGs, where agent interactions happen only at the terminal time. Under such structural restrictions, we relate the study of MFG Nash equilibria to finite dimensional conservation laws. Hence, we find the new monotonicity condition} by observing that, in the theory of nonlinear conservation laws, the maximal time interval for the existence of classical solutions is found by analyzing the composition of wave speeds with the initial condition, cf.~\cite[Theorem 6.1.1]{dafermos16}.
	The analog in mean field games is to analyze the optimal flow for individual players composed with a particular quantity that is ``transported'' by the game; see the summary of our contributions below for a formal explanation.
	From this point of view, it is possible to derive a new monotonicity condition for uniqueness of the equilibrium, which is
 in dichotomy with both the LL and displacement monotonicity conditions: in general it implies neither of these two, nor is it implied by one or the other. For a detailed account on the properties of this new class of monotonicity condition we refer to our accompanying paper \cite{graber2022unique}. So, as a consequence of this new monotonicity condition, we show the global in time well-posedness of the corresponding master equation (from the point of view of classical solutions, {in the case of the model examples involving agent interactions only at the terminal time}) and this is our first contribution in this paper. In the deterministic setting, which we consider here, global in time well-posedness of the master equation has been known in the literature before essentially in the displacement monotone setting only.

{\bf The literature related to multiple Nash equilibria in MFG and selection mechanisms.} A still poorly understood direction in the theory of MFG and master equations is when the underlying game has multiple Nash equilibria. In such a scenario, it is evident that one needs to abandon the notion of classical solutions for the corresponding master equations, as non-uniqueness of equilibria implies the crossing of generalized characteristics. In such cases, in general it is not evident what kind of selection criteria to apply to pick a particular equilibrium from the set of all equilibria, which equilibrium choice is better motivated from the point of view of economical applications, etc. Evidently, the well-posedness question of master equations in a suitable weak sense poses even greater mathematical challenges. A related question is whether it would be possible to restore uniqueness of Nash equilibria by adding a suitably chosen random forcing. In the past couple of years there has been a significant effort to investigate these questions. Interesting non-uniqueness examples (in the case of nonlocal coupling functions that violate the LL monotonicity contion) were constructed in \cite{bardi2019on, briani2018stable, cirant2019on}. In particular in \cite{briani2018stable} the authors have found `stable solutions' which could be selected via a learning procedure. In \cite{cecchin2019concergence} the authors consider a MFG that has a state space with two elements, and which has three closed loop Nash equilibria. The equilibria of the associated $N$-player game select one particular MFG equilibrium in the limit as $N\to+\infty$ and the value functions converge as $N\to+\infty$ to the entropy solution of a one dimensional conservation law. Selection mechanisms were designed in the works \cite{tchuendom2018uniqueness, delarue2020selection}, where the authors study some classes of linear quadratic MFG problems, with multiple equilibria. They show that by adding an appropriate common noise, the uniqueness can be restored in the corresponding MFGs. Moreover, the vanishing noise limit will select certain equilibria. Selection in the latter work is provided also via limits of $N$-player games (as $N\to+\infty$) and minimal cost.  Similarly to \cite{cecchin2019concergence}, in \cite{delarue2020selection} the authors write down the one dimensional scalar conservation law, which plays the role of the master equation, and value function selection can be done via the entropy solution of this PDE. In a similar spirit, in \cite{bayraktar2020non} the authors also rely on the entropy solution of a conservation law to select equilibria for a finite state space MFG model. For more complicated models than linear-quadratic, in general finite dimensional common noise is not sufficient to restore uniqueness, as is demonstrated in \cite{delarue2019restoring}, where a suitable infinite dimensional noise had to be added to restore uniqueness. In the same spirit, in the case of finite state spaces, the uniqueness restoring can be achieved by adding an appropriate noise of Wright--Fisher type (cf. \cite{bayraktar2021finite}). The program initiated in this work was taken further in \cite{cecchin2022selection}, where the authors consider a finite state space potential MFG. The potential (and finite state space) structure allowed them to perform a sort of vanishing viscosity procedure, which beside the selection principle, led to the definition of a suitable notion of weak entropy solution to the associated master equation. Going even further, a breakthrough came in the very recent work \cite{cecchin2022weak} of the same authors, where they focus on showing that the master equation associated to a class of potential MFGs (whose state space is $\bb{T}^d$) has a unique suitably defined entropy solution. The main idea of this work is to generalize the theory of Kru\v{z}kov (cf. \cite{kruvzkov1970first}) to the infinite dimensional space $\sP(\bb{T}^d)$. The potential structure allows the authors to study a more standard Hamilton--Jacobi--Bellman equation above the master equation, which by differentiating in a suitable sense in the direction of the measure variable formally gives the master equation. There is a small caveat however in this approach, as by this formal differentiation procedure a correction term has to be added to the master equation, and therefore the notion of entropy solutions is given in fact for this `modified' version of the master equation.  The presence of a non-degenerate idiosyncratic noise seemed to be essential in this work (so that the underlying MFG system can have classical solutions). 

{\bf The summary of our contributions.}

For an arbitrary long time horizon $T>0$ and $m\in\sP_2(\R^d)$, the MFG system and the associated master equation that we study in this paper have the form
\begin{equation} \label{intro:MFG}
	\left\{
	\begin{array}{ll}	
		\partial_t v + {H(x,D_x v)} = 0, & {\rm{in}}\ (0,T)\times\R^d,\\[3pt]
		\partial_t \mu + \nabla\cdot \del{{D_p H(x,D_x v)}\mu}  = 0& {\rm{in}}\ (0,T)\times\R^d,\\[3pt]
		v(0,x) = f(x,\mu_0),\ \ \mu_T=m, & {\rm{in}}\ \R^d
	\end{array}
	\right.\tag{MFG}
\end{equation}
and
\begin{equation}\label{intro:master}
		\left\{
		\begin{array}{lll}
	\partial_t u(t,x,m) &+ {H\del{x,D_x u(t,x,m)}}\\[3pt] 
	&+\displaystyle \int_{\R^d} D_m u(t,x,m)(y) \cdot {D_p H\del{y,D_x u\del{t,y,m}}}\dif m(y) = 0,  &{\rm{in}}\ (0,T)\times\R^d\times\sP_2(\R^d),\\
		u(0,x,m) & = f\del{x,m},  &{\rm{in}}\ \R^d\times\sP_2(\R^d),
		\end{array}
		\right.\tag{ME}
	\end{equation} 
respectively, where the Hamiltonian $H$ and the initial cost $f$ are given. {We would like to note that in our study the Hamiltonian $H$ is taken to be measure independent, hence the interaction among the agents is happens only through $f$. Despite this restriction, we reveal some new nontrivial features of these MFGs. We believe that these initial investigations could lead to the study of more complex models.}

As our main contribution in this paper, for a given class of data $H$ and $f$, we derive a nonlinear transport equation, set on the space of probability measures, written for an {\it auxiliary variable} that we associate to the MFG. The MFG and the master equation can be studied directly by relying on the properties of this transport equation.

The general philosophy behind the construction of the transport equation can be described as follows. Imagine that the coupling function $f:\R^d\times\sP_2(\R^d)\to\R$ in the MFG problem has the specific structure 
$$f(x,m)=g(x,\sigma_0(m)),$$ 
$g:\R^d\times\mathcal{X}\to\R$ and $\sigma_0:\sP_2(\R^d)\to\mathcal{X}$ being given, where the range of $\sigma_0$ is a given Hilbert space $\mathcal{X}$. The motivation behind such a consideration is that in many applications the data functions in a MFG involve only a specific dependence on the measure, involving moments, specific convolutions, etc.~of the measure variable, which are often finite dimensional quantities. Take for instance $f(x,m) := g\left(x,\int_{\R^d}h(y)\dif{m}(y)\right)$ (for a given function $h:\R^d\to\R^n$, representing for instance some generalized moment of $m$), in which case $\mathcal{X}=\R^n$, as $\sigma_0(m)=\int_{\R^d}h(y)\dif{m}(y)$. {Factorizations of similar flavor have recently been proposed in \cite{lasry2022dimension} in the context of dimension reduction techniques for mean field games.}

The next key element is to rewrite the fixed point formulation of the MFG in terms of a new auxiliary variable $\sigma$. Roughly speaking, for $m\in\sP_2(\R^d)$, we define $\sigma(t,m)$ as the solution for the equation
\begin{equation}\label{intro:FP}
\sigma=\sigma_0(\mu^\sigma_t),\tag{FixedP}
\end{equation}
where $\mu^\sigma_t$ is the solution to the continuity equation, where the initial datum for the HJB equation is given by $g(\cdot,\sigma)$. We notice that \eqref{intro:FP} is a fixed point equation in the space $\mathcal{X}$.

By doing so, we can reveal some new hidden structures of the MFG. {\it First}, as sufficient conditions for the unique solvability of the fixed point problem \eqref{intro:FP}, we are able to identify new monotonicity conditions on the data functions, which are in line with the structure of the specific space $\mathcal{X}$. Indeed, the new monotonicity condition can be expressed via the monotonicity of  the operators $\Sigma_t$ (or $-\Sigma_t$), where $\Sigma_t:\mathcal{X}\to\mathcal{X}$ is simply given by
$$
\Sigma_t(\sigma) = \sigma - \sigma_0(\mu^\sigma_t), \ t\in[0,T].
$$
Remarkably, these new monotonicity conditions are in general not covered by the well-understood LL and displacement monotonicity conditions from the literature (nor even by the so-called anti-monotone version of these, considered in \cite{mou2022mean}), and moreover, they are in general in dichotomy with these (cf.~\cite{graber2022unique}). {\it Second}, we can write a nonlinear transport equation for $\sigma$, which can  then be used to study the associated master equation. Indeed, the formula \eqref{intro:FP} suggests that $\sigma(t,m)$ is formally defined via a sort of d'Alembert formula, where $t\mapsto\mu_t$ stands for the characteristic curve. Thus, the transport equation is  given formally by
\begin{equation} \label{intro:transport}
	\left\{
	\begin{array}{lll}
			\partial_t \sigma(t,m) 
			 &+ \displaystyle\int_{\R^d} D_m \sigma(t,m)(\xi) \cdot D_p H\del{\xi,D_x v\del{t,\xi,\sigma(t,m)}}\dif m(\xi)= 0, & {\rm{in}}\ (0,T)\times\sP_2(\R^d),\\
			& \sigma(0,m) = \sigma_0(m), &{\rm{in}}\ \sP_2(\R^d).
	\end{array}
	\right.\tag{TE}
	\end{equation}
In our study we choose $\mathcal{X}$ to be a finite dimensional space, most often $\mathcal{X}=\R$, in which case \eqref{intro:transport} is a scalar equation. Here by $D_m\sigma$ we denote the Wasserstein gradient of $\sigma$. 
	The connection between the solution $u$ to \eqref{intro:master}, the solution $v = v(t,x,\sigma(t,m))$ to the HJB equation from \eqref{intro:MFG} (solved on the time interval $(0,t)$ when the HJB equation is initialized at $g(x,\sigma(t,m))$) and the solution $\sigma = \sigma(t,m)$ to \eqref{intro:transport} is given by
	$$
	u(t,x,m)=v(t,x,\sigma(t,m)).
	$$
	At this point, the study of the transport equation \eqref{intro:transport} is nontrivial because it is essentially coupled to the system \eqref{intro:MFG}, which is itself a coupled system.
	
	The main goal of the current manuscript is to initiate the first steps in the study of \eqref{intro:transport} and its links to the corresponding master equation and selection principles.
{We will} assume that the Hamiltonian is independent of the measure variable, i.e.~$H(x,p,m)=H(x,p)$, {and so} the solution $v = v(t,x,\sigma)$ to the HJB equation depends only on the parameter $\sigma \in \s{X}$, and thus \eqref{intro:transport} is a self-contained transport equation.
We will impose this assumption throughout this paper, leaving more general cases for future research. 
To convince the reader that such an assumption is not overly simplistic, let us consider a simple coordination game in which players must decide how much to shift their initial state in light of what they predict will be the \emph{final} distribution of states after everyone has shifted
(for example, the state variable could represent a position in some sort of ideological space, and the final distribution would then correspond to the popularity of each ideological position, which individuals must take into account before choosing a final position).
Thus each individual player might seek to minimize
\begin{equation*}
	\frac{\del{x-y}^2}{2t} + \sigma_0(m) y
\end{equation*}
over all possible final positions $y$, where $x$ is the player's initial position, $t$ is a given time horizon, and $m$ is the final anticipated distribution of states.
The quadratic cost represents a personal cost to changing one's position, and $\sigma_0(m)$ is a function that uses the final distribution to determine the marginal cost of increasing one's position.
This leads to a mean field game whose solution turns out to have a formal connection to a classical conservation law; see Section \ref{sec:entropy}.
More generally, when the final cost is nonlinear with respect to the state variable, the game will have a more complex structure, which is contained in the transport equation \eqref{intro:transport} and cannot be reduced to a classical equation on finite dimensional space. For this reason, it would not be an exaggeration to say that we are quite far from having a complete understanding of mean field games and the corresponding master equations even in the case when $H(x,p,m)=H(x,p)$. Our aim is to use this restrictive framework to reveal important underlying structure that we expect to be useful for understanding the selection of equilibria in mean field games with continuous state space.

Before elaborating further, we emphasize that in our study we require neither the regularizing effect of a non-degenerate idiosyncratic noise, nor a potential structure for the MFG. In particular, there is no underlying differentiation involved to relate the master equation to the transport equation for $\sigma$. It is also worth noticing that \eqref{intro:transport} is a {\it nonlinear} transport equation, so structurally it is completely different from the linear equations considered in \cite{buckdahn2017mean, chassagneux2022weak}.


The new monotonicity condition imposed on $\Sigma_t$ (or $-\Sigma_t$) will ensure the global in time well-posedness of \eqref{intro:transport} in the classical sense. In turn, this ensures the well-posedness of \eqref{intro:master} in the classical sense, in the case of data functions that are in general outside of the LL or displacement monotone regime, in the case of deterministic problems. 
		
Although the nonlinear transport equation character of the master equation is evident to experts (cf.  \cite{cecchin2022selection,cecchin2022weak}), the new transport equation \eqref{intro:transport} and its link to \eqref{intro:master} have a clear advantage for a rich class of models, which we explore in this paper. In certain cases, \eqref{intro:transport} reduces to a classical conservation law, which allows us to apply the theory of entropy solutions in a straightforward manner. By contrast, the standard master equation \eqref{intro:master} does not have such a clean structure, even in these restrictive cases. So, in some regimes where classical solutions to \eqref{intro:master} cease to exist (which is typically the case when monotonicity conditions on the data are violated), one can propose suitable notions of weak solutions to the master equation, via weak solutions to \eqref{intro:transport}.

{Instead of} studying such weak solutions to this equation in great generality, {our objective in the last part of this paper} is to understand simple scenarios. So, as our initial investigation, in this manuscript we present a case study {(leaving more general cases to future study)}: when $H(x,p,\mu)=H(p)$, \eqref{intro:transport} can be seen as a deterministic scalar conservation law, for which the suitable notion of weak solution is that of weak entropy solution. Furthermore, if the initial cost function has the form
$$g(x,\sigma_0(m))=x\cdot \bar f(\sigma_0(m)),$$ 
for some $\bar f:\mathcal{X}\to\R^d$, \eqref{intro:transport} can be identified with a finite dimensional scalar conservation law written for the mean of probability measures. This dimension reduction (see also \cite{lasry2022dimension} for dimension reduction techniques in the study of MFG) allows us in particular to consider initial data $\sigma_0$ that are even discontinuous. In this case study, we have a full description of the links between Nash equilibria of the MFG and entropy solutions to \eqref{intro:transport}. We construct some simple, yet sobering, examples where Nash equilibria of the MFG cannot be selected by entropy solutions of the conservation law. First, we provide examples where Nash equilibria do not even exist, despite the fact that the conservation law has a unique entropy solution. Even more strikingly, we show that even when Nash equilibria do exist, it could be that none of them is given by the entropy solution to the transport equation.
	
The structure of the rest of the paper is as follows. In Section \ref{sec:simple game}, after introducing some preliminary notions and notations, we describe the new fixed point formulation of deterministic MFG {(involving measure independent Hamiltonians)} in terms of the auxiliary quantity $\sigma$, we present the construction of our main transport equation associated to the MFG and explain the connection between the transport equation and the master equation. In Section \ref{sec:classical}, still in the deterministic setting, we show that our newly proposed monotonicity condition yields the existence of a global in time classical solution to the transport equation, which in turn implies the global in time well-posedness of the master equation. 
Finally, in Section \ref{sec:entropy} we construct entropy solutions to the transport equation in a specific regime, when this reduces to a finite dimensional scalar conservation law. We end the manuscript with the description of several concrete examples, where we demonstrate how the corresponding MFG Nash equilibria are or are not linked to the entropy solutions that we have constructed.

	\section{Preliminaries and the setup for the model mean field game} \label{sec:simple game}
	
	\subsection{Preliminaries}
	
Let us introduce some notations and preliminaries that will be used throughout the paper. 

We denote by $p^1,p^2:\bb{R}^d \times \bb{R}^d \to \bb{R}^d$ the canonical projections given by $p^1(x,y) = x$ and $p^2(x,y) = y$, respectively.
	If $\s{X}$ and $\s{Y}$ are topological spaces, then for a Borel measurable map $f:\mathcal{X} \to \mathcal{Y}$ and a measure $m$ supported on $\s{X}$, we denote by $f_\sharp m$ the push-forward measure supported on $\s{Y}$ given by the relation $(f_\sharp m)(E) = m\del{f^{-1}(E)}$.	
	For $p\ge 1$, we use the notation $\sP_p(\R^d)$ to denote the space of nonnegative Borel probability measures supported in $\R^d$ with finite $p-$order moments. On $\sP_p(\R^d)$ we define the standard $p-$Wasserstein distance $W_p:\sP_p(\R^d)\times\sP_p(\R^d)\to[0,+\infty)$ as,
	$$W_p(\mu,\nu):=\inf\left\{\int_{\R^d\times\R^d}|x-y|^p\dif{\gamma}:\ \ \gamma\in\sP_p(\R^d\times\R^d),(p^1)_\sharp\gamma=\mu,(p^1)_\sharp\gamma=\mu\right\}^{\frac1p}.$$
Classical results imply that there exists at least one optimizer $\gamma$ in the previous problem. We denote by $\Pi_o(\mu,\nu)$ the set of all optimal plans $\gamma$.
	
	Let $(\Omega,\s{A},\bb{P})$ be an atomless probability space. We use the notation $\bb{H}:=L^2(\Omega;\R^d).$ It is a well-known result that if $\bb{P}$ has no atoms, then for each $m \in \sP_2(\bb{R}^d)$ there exists $X \in \bb{H}$ such that $X_\sharp \bb{P} = m$.  In this case, $m$ is the law of the random variable $X$.
	
{\bf Derivatives for functions defined on the space of measures.} Using the terminology from \cite{ambrosio2008gradient}(see also \cite[Chapter 5]{carmona2018probabilistic}), we say that a function $U:\sP_2(\R^d)\to\R$ has a {\it Wasserstein gradient} at $m\in\sP_2(\R^d)$, if there exists $D_m U(m,\cdot)\in \overline{\nabla C_c^\infty(\R^d)}^{L^2_m}$ (the closure of gradients of $C_c^\infty(\R^d)$ function in $L^2_m(\R^d;\R^d)$) such that  for all $m'\in\sP_2(\R^d)$ in any small neighborhood of $m$ we have the first order Taylor expansion
	$$
	U(m') = U(m) +\iint_{\R^d\times\R^d} D_mU(m,x)\cdot(y-x)\dif \gamma(x,y) + o(W_2(m,m')),\ \forall\gamma\in\Pi_o(m,m').
	$$
We say that $U$ is {\it differentiable on $\sP_2(\R^d)$} if its Wasserstein gradient exists at any point.
	
For $U:\sP_2(\R^d)\to\R$, we can define its `lift' $\tilde U:\bb{H}\to\R$ by $\tilde U(X):=U(X_\sharp\bb{P})$. By the results from \cite{gangbo2019differentiability} and \cite[Chapter 5]{carmona2018probabilistic} (cf. \cite{lions07}), $U$ is differentiable at $m$, if and only if $\tilde U$ is Fr\'echet differentiable at $X$ for any $X\in\bb{H}$, such that $X_\sharp\bb{P}=m$. In this case we have the decomposition 
$$
D\tilde U(X) = D_m U(m,\cdot)\circ X\ \ {\rm{in}}\ \bb{H}, \ \ \forall X\in\bb{H}:\ X_\sharp\bb{P}=m,
$$
where $D\tilde U(X)\in\bb{H}$ stands for the Fr\'echet derivative of $\tilde U$ at $X$.

Based on \cite[Chapter 5]{carmona2018probabilistic}, we say that $U$ is {\it fully $\s{C}^1$ on $\sP_2(\R^d)$} if it continuous, $D_mU$ exists at any point $m\in\sP_2(\R^d)$ and this has a jointly continuous extension to $\sP_2(\R^d)\times\R^d$. In this case, we still continue to denote this extension by $D_m U$. We denote the space of fully $\s{C}^1$ functions by $\s{C}^1(\sP_2(\R^d)).$

We say that $U:\sP_2(\R^d)\to\R$ has a {\it linear derivative at $m$}, if there exists a continuous function $\frac{\delta U}{\delta m}(m,\cdot):\R^d\to\R$ {with at most quadratic growth at infinity}, such that 
$$
\lim_{\varepsilon\to 0}\frac{U(m+\varepsilon{(\bar m-m)})-U(m)}{\varepsilon}=\int_{\R^d}\frac{\delta U}{\delta m}(m,y)\dif (\bar m - m)(y),
$$
for any $\bar m\in\sP_2(\R^d)$. It is well-known (see \cite[Chapter 5]{carmona2018probabilistic}) that if $\frac{\delta U}{\delta m}(m,\cdot)$ exists, and is differentiable in the second variable, with $D_y\frac{\delta U}{\delta m}(m,\cdot)\in L^2_m(\R^d;\R^d)$, then $U$ has a Wasserstein gradient at $m$ and we have the correspondence $D_m U(m,y)=D_y\frac{\delta U}{\delta m}(m,y)$ for any $y\in\spt(m).$

	\subsection{The model mean field game and the formal derivation of the transport equation}\label{subsec:derivation}
	
	We will consider the following mean field game. Let $T>0$ be a given time horizon.
	Let $L:\bb{R}^d \times \bb{R}^d \to \bb{R},$ {$\sigma_0:\sP_2(\bb{R}^d) \to \s{X},$ and $g:\bb{R} \times \s{X} \to \bb{R}$ be continuous functions to be specified later. For ease of presentation, we will restrict ourselves to the scalar case $\s{X} = \bb{R}$, and we will derive a single transport equation. It is easy to generalize the derivation in the case where $\s{X} = \bb{R}^m$, in which case we get a system of $m$ equations, or even when $\s{X}$ is a Hilbert space.
	The results of Section \ref{sec:classical} on classical solutions can be generalized to these cases with little difficulty, bearing in mind the usual notion of monotonicity of vector fields on a Hilbert space.
	By contrast, generalizing the results on entropy solutions in Section \ref{sec:entropy} would not be straightforward, due to the difficulty of studying entropy solutions to systems of conservation laws.}

	For each $t\in(0,T), x \in \bb{R}^d$ and $\sigma \in \bb{R}$, set
	\begin{equation}\label{def:control_prob}
		\begin{split}
			v(t,x,\sigma) &= \min \left\{\int_0^t L\del{x(s),\dot x(s)}\dif s + g(x(0),\sigma) : x(\cdot) \in W^{1,1}\left([0,t];\bb{R}^d\right), \ x(t) = x\right\}, \\ 
			x^*(t,x,\sigma) &= \argmin \cbr{\int_0^t L\del{x(s),\dot x(s)}\dif s + g(x(0),\sigma) : x(\cdot) \in W^{1,1}\del{[0,t];\bb{R}^d}, \ x(t) = x},\\
			x^*(t,x,\sigma,s) &= \cbr{x(s) : x(\cdot) \in x^*(t,x,\sigma)}.
		\end{split}
	\end{equation}
	We will assume properties of $L$ and $g$ that ensure $x^*( t,x,\sigma)$ is always non-empty.
	
	In this mean field game, { we are given a time} $t\in (0,T)$ and an agent distribution $m_t=m\in\sP_2(\R^d)$, initializing the game. Agents make a prediction of the flow of distribution $(m_s)_{s\in[0,t]}$, then a typical individual agent solves the previous optimization problem, where the cost parameter $\sigma$ is determined by plugging the anticipated agent distribution at time $t=0$ into the function $\sigma_0$, i.e. $\sigma = \sigma_0(m_0)$.

	Below we give a precise definition of Nash equilibrium.
	
\begin{definition} \label{def:equilibrium}
	For a given $t>0$ and for $m\in\sP_2(\R^d)$ a given agent distribution at time $t$, a Nash equilibrium is a probability measure $\pi$ on $\bb{R}^d \times \bb{R}^d$ such that $(p^1)_\sharp \pi = m$ and
	\begin{equation} \label{eq:equi def}
		y \in x^*\del{t,x,\sigma_0((p^2)_\sharp \pi),0} \quad \pi-{\rm{a.e.}}~(x,y).
	\end{equation}
\end{definition}	

Definition \ref{def:equilibrium} is essentially the usual one for the Lagrangian formulation of a mean field game.
The only difference is that the running cost does not depend on the distribution of players.
For this reason, we need only define the equilibrium as a coupling between initial and final measures, not as a measure on the space of curves.

Notice that the scalar {$\sigma_0((p^2)_\sharp \pi)$} plays a pivotal role in this definition.
Indeed, an equivalent way of stating the definition of Nash equilibrium is as follows.
For $\sigma \in \bb{R}, m \in \sP(\bb{R}^d),$ and $t > 0$, denote by $\Pi^*(\sigma,m,t)$ the set of all $\pi \in \sP(\bb{R}^d \times \bb{R}^d)$ such that $(p^1)_\sharp \pi = m$ and 
\begin{equation}
	y \in x^*(t,x,\sigma,0) \quad \pi-\text{a.e.}~(x,y).
\end{equation}
Suppose $\sigma$ satisfies
\begin{equation} \label{eq:equi cond}
	\sigma \in \cbr{\sigma_0((p^2)_\sharp \pi) : \pi \in \Pi^*(\sigma,m,t)} =: \Sigma_0(\sigma,t,m).
\end{equation}
Then there exists a $\pi \in \Pi^*(\sigma,m,t)$ such that $\sigma = \sigma_0((p^2)_\sharp \pi)$, which makes $\pi$ an equilibrium.
Conversely, if $\pi$ is an equilibrium, then $\sigma = \sigma_0((p^2)_\sharp \pi)$ satisfies \eqref{eq:equi cond}.
Hence Definition \ref{def:equilibrium} is equivalent to the fixed point problem \eqref{eq:equi cond}.

If $x_0^*(t,x,\sigma)$ is a single-valued function, then so are $\Pi^*(\sigma,m,t)$ and $\Sigma_0(\sigma,t,m)$, namely
\begin{equation} \label{eq:Sigma_0}
	\Pi^*(\sigma,m,t) = x^*(t,\cdot,\sigma,0)_\sharp m, \quad \Sigma_0(\sigma,t,m) = \sigma_0\del{x^*(t,\cdot,\sigma,0)_\sharp m}.
\end{equation}
Then the equilibrium condition reduces to the equation
\begin{equation}\label{eq:equil cond single valued}
	\sigma = \sigma_0\del{x^*(t,\cdot,\sigma,0)_\sharp m}.
\end{equation}

\begin{remark}		
 Another way to define a Nash equilibrium is in terms of random variables.
	Define $\tilde \sigma_0 : \bb{H} \to \bb{R}$ by $\tilde \sigma_0(X) := \sigma_0(X_\sharp\bb{P})$.
	The Nash equilibrium for such $m$ is given by finding a random variable $Y \in \bb{H}$ such that
	\begin{equation}
		Y \in x^*\del{t,X,\tilde \sigma_0(Y),0},\ \ \bb{P}-{\rm{a.s.}}
	\end{equation}
\end{remark}

	Let us introduce the Hamilton in the standard way by
	$$H(x,p) := \sup_q \del{q \cdot p - L(x,q)}.$$ 
Suppose that the equilibrium problem \eqref{eq:equil cond single valued} has a solution and denote this by $\sigma(t,m)$. Formally, $\sigma(t,m)$ satisfies the following transport equation:
	\begin{equation} \label{eq:transport}
		\left\{
		\begin{array}{ll}
\displaystyle\partial_t \sigma(t,m) + \int_{\R^d} D_m \sigma(t,m)(y) \cdot D_p H\left(y,D_x v\del{t,y,\sigma(t,m)}\right)\dif m(y) = 0, & {\rm{in}}\ (0,T)\times\sP_2(\R^d),\\
\sigma(0,m) = \sigma_0(m), & {\rm{in}}\ \sP_2(\R^d).
		\end{array}
		\right.
	\end{equation}
	To see this, note that for each $\sigma$, $v( t,x,\sigma)$ is the unique viscosity solution of
	\begin{equation} \label{eq:HJ}
	\left\{
	\begin{array}{ll}	
		\partial_t v + H(x,D_x v) = 0, & {\rm{in}}\ (0,T)\times\R^d,\\
		v(0,x) = g(x,\sigma), & {\rm{in}}\ \R^d.
	\end{array}
	\right.
	\end{equation}
	Suppose that $v(\cdot,\cdot,\sigma)$ is differentiable at $(t,x)$, then the corresponding optimal trajectory is unique and given by the solution to
	\begin{equation} 
		\dot x(s) = D_p H\del{x(s),D_x v(s,x(s),\sigma)},\ s\in(0,t), \quad x(t) = x.
	\end{equation}
	Let $m \in \sP_2(\bb{R})$ be given, pick $X \in \bb{H}$ such that $X_\sharp \bb{P} = m$, and let $Y$ be a corresponding equilibrium configuration. Suppose that this equilibrium is unique.
	Then almost surely we have $Y = X(0)$, where $X(s)$ satisfies
	\begin{equation} \label{eq:characteristics}
		\dot X(s) = D_p H\del{X(s),D_x v(s,X(s),\tilde \sigma_0(X(0)))},\ s\in(0,t), \quad X(t) = X.
	\end{equation}
	Moreover, $\sigma(t,m) = \tilde \sigma_0(X(0))$.
	Thus Equation \eqref{eq:characteristics} formally defines characteristics for the flow $\sigma(t,m)$.
	Indeed, if $\tilde \sigma(t,X) := \sigma(t,X_\sharp \bb{P})$, then for any $\tau \in [0,t]$ we have $\tilde \sigma\del{\tau, X(\tau)} = \tilde \sigma_0(X(0))$, because we again have $X(0) \in x_0^*\del{\tau,X(\tau),\tilde \sigma_0(X(0))}$ a.s.
	The PDE whose characteristics are given by \eqref{eq:characteristics}, formally reads as
	\begin{equation} \label{eq:transport lifted}
		\partial_t \tilde \sigma(t,X) + \ip{D_X \tilde\sigma(t,X)}{D_p H\del{X,D_x v(t,X,\tilde\sigma(X,t))}} = 0,
	\end{equation}
	where the inner product is the standard one on $\bb{H}$.
	Projecting down to the Wasserstein space, we arrive at \eqref{eq:transport}.

	Moreover, we define the \emph{master field} $u:(0,T)\times\R^d\times\sP_2(\R^d)\to\R$ by
	\begin{equation}\label{def:master_function}
		u(t,x,m):= v\del{t,x,\sigma(t,m)}.
	\end{equation}
	Formally, $u$ satisfies the master equation
	\begin{equation}\label{eq:master equation}
		\left\{
		\begin{array}{lll}
	\partial_t u(t,x,m) &+ H\del{x,D_x u(t,x,m)}\\ 
		&+\displaystyle \int_{\R^d} D_m u(t,x,m)(y) \cdot D_p H\del{y,D_x u\del{t,y,m}}\dif m(y) = 0,  &{\rm{in}}\ (0,T)\times\R^d\times\sP_2(\R^d),\\
		u(0,x,m) & = g\del{x,\sigma_0(m)},  &{\rm{in}}\ \R^d\times\sP_2(\R^d).
		\end{array}
		\right.
	\end{equation}
	
	To see this, we use the following (formal) identities:
	\begin{equation}\label{eq:correspondence}
		\begin{split}
			\partial_t u(t,x,m) &= \partial_t v\del{t,x,\sigma(t,m)} + \partial_\sigma v\del{t,x,\sigma(t,m)} \partial_t \sigma(t,m),\\
			D_x u(t,x,m) &= D_x v\del{t,x,\sigma(t,m)},\\
			D_m u(t,x,m)(\cdot) &= \partial_\sigma v\del{t,x,\sigma(t,m)}D_m \sigma(t,m)(\cdot).
		\end{split}
	\end{equation}
	Multiplying equation \eqref{eq:transport} by $\partial_\sigma v\del{t,x,\sigma(t,m)}$ and combining with \eqref{eq:HJ}, we obtain indeed \eqref{eq:master equation}.

	\section{Classical solutions in the monotone case}\label{sec:classical}
	
	In this section we will impose the following assumptions.

\begin{assumption}\label{ass:H}
Assume that $H:\R^d\times\R^d\to\R$ is twice continuously differentiable, $D_p H$ is Lipschitz continuous in both variables, and $D^2H$ is uniformly bounded.
\end{assumption}

\begin{assumption}\label{ass:v_C11}
Assume that for each $\sigma \in \range{\sigma_0}$, $v(\cdot,\cdot,\sigma)$ is a \emph{classical} $C^{1,1}_{\rm{loc}}$ solution to the Hamilton--Jacobi equation \eqref{eq:HJ}, such that $\R^d\ni x\mapsto D_x v(t,x,\sigma)$ is Lipschitz continuous, for all $t\in[0,T]$ and $\sigma\in\range{\sigma_0}$, with a Lipschitz constant independent of $t$ and $\sigma$.
\end{assumption}	
\begin{remark}
\begin{enumerate}
\item Let us note that Assumption \eqref{ass:v_C11} includes assumptions on $g(\cdot,\sigma)$ and further assumptions on $H$ in an implicit way. Sufficient assumptions that guarantee the fulfillment of Assumption \ref{ass:v_C11} are the joint convexity of $\R^{2d}\ni(x,w)\mapsto L(x,w)$ (where $L(x,\cdot)=H(x,\cdot)^*$ for all $x\in\R^d$) and the convexity of $\R^d\ni x\mapsto g(x,\sigma)$, together with $D_x^2g(\cdot,\sigma)$ uniformly bounded, for all $\sigma\in\R^d$. By adding idiosyncratic noise with a positive intensity, the required regularity on $v$ would be a consequence of parabolic regularity.
\item As we aim to obtain classical solutions for \eqref{eq:transport} and hence for \eqref{eq:master equation} in the case of deterministic problems, it is necessary to suppose that the Hamilton--Jacobi equation from the corresponding MFG system has a classical solution. For this, it is in general inevitable to impose convexity of the function $g(\cdot,\sigma)$ for all $\sigma$ (see for instance \cite[Theorem 3.1]{graber2022unique}).
\end{enumerate}
\end{remark}	
	
	We then have for each $t > 0$ a unique optimal flow $[0,t]\ni s \mapsto x^*( t,x,\sigma,s)$ given by solving
	\begin{equation}
		\dot x(s) = D_p H\del{x(s),D_x v\del{s,x(s),\sigma}}, \quad x(t) = x.
	\end{equation}
	Even with $x_0^*(t,\cdot,\sigma,0)$ well-defined (and in particular unique), it can happen that multiple equilibria exist.
	As in the case of conservation laws, we need to impose monotonicity on the initial condition in order to guarantee uniqueness and the propagation of smoothness.
	
	Let us recall the definition of $\Sigma_0$ from \eqref{eq:Sigma_0}, i.e.	
	$$
		\Sigma_0(\sigma,t,m) = \sigma_0\del{x^*(t,\cdot,\sigma,0)_\sharp m}.
	$$
	Note that Equilibrium condition \eqref{eq:equil cond single valued} is the same as
	\begin{equation} \label{eq:equil cond1}
		\sigma = \Sigma_0(\sigma,t,m).
	\end{equation}
	Our key assumption in this section is the following.
	\begin{assumption}\label{eq:monotonicity}
	Assume that $\Sigma_0$ is differentiable with respect to $\sigma$ and there exists $c_0<1$ such that $\partial_\sigma\Sigma_0(\sigma,t,m) \leq c_0,$ for all $(\sigma,t,m)\in\range(\sigma_0)\times[0,T]\times\sP_2(\R^d)$.
	\end{assumption}

\begin{remark}\label{rmk:anti}
\begin{enumerate}
\item Note that Assumption \ref{eq:monotonicity} is a condition that involves the data $H$, $g$ and $\sigma_0$ in an implicit way.
\item We note that in our consideration, one could change the `sign' of the monotonicity condition. Replacing Assumption \ref{eq:monotonicity} with the one that there exists $c_0>1$ such that $\partial_\sigma\Sigma_0(\sigma,t,m) \geq c_0,$ for all $(\sigma,t,m)\in\range(\sigma_0)\times[0,T]\times\sP_2(\R^d)$, would yield the very same well-posedness theory. It is natural to refer to this latter condition as \emph{anti-monotonicity}. However, we remain consistent with the sign of the monotonicity condition (as in Assumption \ref{eq:monotonicity}) throughout the manuscript. 
\item {The hypothesis on $\Sigma_0$ in Assumption \ref{eq:monotonicity} is very much reminiscent of classical monotonicity conditions imposed in the literature for conservation laws. Indeed, if we consider the Cauchy problem associated to a classical scalar conservation law in one space dimension, $\partial_t u(t,x)+\partial_x(G(u(t,x)))=0$, with $u(0,x) = u_0(x)$, the classical monotonicity condition (cf. \cite[Theorem 6.1.1]{dafermos16}) can be imposed as $\partial_x [G'(u_0(x))]\ge 0.$ If this is in place then the Cauchy problem has a globally defined classical solution. Furthermore, as the solution is constant along characteristics and characteristics are straight lines, one can write the implicit (fixed point) equation
$$u(t,x) = u_0(x-G'(u(t,x))).$$
Therefore, the monotonicity described above, can be expressed via the monotonicity of the real valued function $u\mapsto u - u_0(x-G'(u))$, for all $x\in\R$. This map involves the data and the characteristics in an implicit way. So, this is the same philosophy that we use in Assumption \ref{eq:monotonicity}.
}
{\item  It is straightforward to generalize Assumption \ref{eq:monotonicity} to the case where $\sigma$ takes values in $\bb{R}^n$; see the remark following Corollary \ref{cor:trans_master}.}
\end{enumerate}
\end{remark}

\begin{assumption}
$\sigma_0:\sP_2(\R^d)\to\R$ is bounded.
\end{assumption}

	\begin{assumption}\label{ass:sigma_0}
	We assume that $\frac{\delta\sigma_0}{\delta m}$ exists and $\sP_2(\R^d)\times\R^d\ni (m,y)\mapsto \frac{\delta\sigma_0}{\delta m}(m,y)$ is uniformly continuous. We assume furthermore that for any $m\in\sP_2(\R^d)$ the gradient $D_y\frac{\delta\sigma_0}{\delta m}(m,\cdot)$ exists, it is an element of $L^2_m(\R^d;\R^d)$ and it is jointly continuous in $(m,y).$
	\end{assumption}
	
\begin{remark}
We notice that as a consequence of Assumption \ref{ass:sigma_0} we have that $\sigma_0$ is a fully $\s{C}^1$ and $D_y\frac{\delta\sigma_0}{\delta m}:\sP_2(\R^d)\times\R^d\to\R^d$ provides a jointly continuous extension for $D_m\sigma_0$. 
\end{remark}

Before stating and proving our main results in this section, let us pause and discuss more about Assumption \ref{eq:monotonicity}, by providing also examples of data fulfilling it. {For a thorough discussion about properties of this monotonicity condition, we refer to our recent work \cite{graber2022unique}.}

\subsection{Examples satisfying our monotonicity condition} \label{sec:examples}
First, let us observe that by the chain rule, Assumption \ref{eq:monotonicity} can be rewritten as 
\begin{equation}\label{mon:after_chain}
\int_{\R^d}D_m\sigma_0(x^*(t,\cdot,\sigma,0)_\sharp m)(x^*(t,x,\sigma,0))\cdot\partial_\sigma x^*(t,x,\sigma,0)\dif{m}(x)\le c_0,
\end{equation}
for all $(\sigma,t,m)\in \range(\sigma_0)\times[0,T]\times\sP_2(\R^d)$.

Let $H:\R^d\times\R^d\to\R$ depend only on the momentum variable, i.e. $H(x,p)\equiv H(p)$, let  $\vphi,\psi:\R^d\to\R$ and $G:\R\to\R$ be given, and  let us consider $g:\R^d\times\sP_2(\R^d)\to\R$ defined by
\begin{equation}\label{ex:g}
g(x,m)=\vphi(x)G\left(\int_{\R^d}\psi(y)\dif{m}(y)\right).
\end{equation}
With this choice of initial data, one can observe that there are at least two quite natural choices for $\sigma_0$. We discuss these two cases separately.

{\it Case 1.} $\sigma_0:\sP_2(\R^d)\to\R$ is given by $\sigma_0(m):= \int_{\R^d}\psi(y)\dif{m}(y)$. 

Rewriting $g$ using the variable $\sigma\in\R$, we have $g(x,\sigma)=\vphi(x)G(\sigma),$ and so using the Hopf--Lax formula, we can define $x^*(t,x,\sigma,0)$ as the unique solution of 
$$
\min_{y\in\R^d}\left\{tH^*\left(\frac{x-y}{t}\right)+g(y,\sigma)\right\}.
$$
By differentiation, we find that $x^*(t,x,\sigma,0)$ satisfies the implicit equation
$$
x^*(t,x,\sigma,0)+t DH(D_xg(x^*(t,x,\sigma,0),\sigma)) = x,
$$
from which, with the notation 
\begin{equation}\label{def:M}
M(t,z,\sigma):=- t \left(I_d+t D^2H(D_xg(z,\sigma))D^2_{xx}g(z,\sigma)\right)^{-1}D^2H(D_xg(z,\sigma)),
\end{equation}
one obtains
\begin{align*}
\partial_\sigma x^*(t,x,\sigma,0) &= M(t,x^*(t,x,\sigma,0),\sigma) \partial_\sigma D_xg(x^*(t,x,\sigma,0),\sigma)\\
& = G'(\sigma)M(t,x^*(t,x,\sigma,0),\sigma)D\vphi(x^*(t,x,\sigma,0)).
\end{align*}
Let us underline that it is not a loss of generality to assume the invertibility of the matrix 
$$I_d+t D^2H(D_xg(z,\sigma))D^2_{xx}g(z,\sigma)$$ 
in the definition \eqref{def:M}. Indeed, this is strongly related to the solvability of the associated HJB equation in the classical sense, as we can see in \cite[Theorem 1.5.3]{cannarsa2004semiconcave}. In particular, if $H$ and $g(\cdot,\sigma)$ are supposed to be convex, this property holds true for all $t>0$ (cf. \cite[Corollary 1.5.5]{cannarsa2004semiconcave}).

We also have 
$$
D_m\sigma_0(m,y)=D\psi(y),
$$
and so, \eqref{mon:after_chain}, with the notation $m_{t,\sigma}:=x^*(t,\cdot,\sigma,0)_\sharp m$, can be written as 
\begin{align}\label{eq:case1}
\nonumber&\int_{\R^d}D\psi(x^*(t,x,\sigma,0))\cdot G'(\sigma)M(t,x^*(t,x,\sigma,0),\sigma)D\vphi(x^*(t,x,\sigma,0))\dif{m}(x)\\
&=\int_{\R^d}D\psi(y)\cdot G'(\sigma)M(t,y,\sigma)D\vphi(y)\dif{m}_{t,\sigma}(y)\le c_0,
\end{align}
for all $m\in\sP_2(\R^d)$, $t\in [0,T]$ and $\sigma\in\range(\sigma_0)$.

{\it Case 2.} $\sigma_0:\sP_2(\R^d)\to\R$ is given by $\sigma_0(m):= G\left(\int_{\R^d}\psi(y)\dif{m}(y)\right)$.

In this case, using the $\sigma$ variable, we have $g(x,\sigma)=\vphi(x)\sigma.$ A similar computation as in the previous case yields
$$
\partial_\sigma x^*(t,x,\sigma,0) = M(t,x^*(t,x,\sigma,0),\sigma)D\vphi(x^*(t,x,\sigma,0)).
$$
Furthermore, 
$$
D_m\sigma_0(m,x) = G'\left(\int_{\R^d}\psi(y)\dif{m}(y)\right) D\psi(x).
$$
Therefore, \eqref{mon:after_chain} reads as 
\begin{align}\label{eq:case2}
\nonumber&\int_{\R^d}G'\left(\int_{\R^d}\psi(y)\dif{m}_{t,\sigma}(y)\right) D\psi(x^*(t,x,\sigma,0))\cdot M(t,x^*(t,x,\sigma,0),\sigma)D\vphi(x^*(t,x,\sigma,0))\dif{m}(x)\\
&=\int_{\R^d}G'\left(\int_{\R^d}\psi(y)\dif{m}_{t,\sigma}(y)\right)D\psi(y)\cdot M(t,y,\sigma)D\vphi(y)\dif{m}_{t,\sigma}(y)\le c_0
\end{align}
for all $m\in\sP_2(\R^d)$, $t\in [0,T]$ and $\sigma\in\range(\sigma_0)$.

These computations allow us to formulate the following sufficient condition on our data, yielding the monotonicity hypotheses.
\begin{lemma}\label{lem:ex}
Let $g$ be given as in \eqref{ex:g}.
\begin{enumerate}
\item Let $\sigma_0:\sP_2(\R^d)\to\R$ be given by $\sigma_0(m):= \int_{\R^d}\psi(y)\dif{m}(y)$. Suppose that $\vphi,\psi:\R^d\to\R$ are twice continuously differentiable, $\psi$ and $D\psi$ are uniformly bounded. Suppose that $G:\R\to\R$ is continuously differentiable and $G(s)\ge 0$ for all $s\in[\min\psi,\max\psi]=\range(\sigma_0)$. Suppose that $\vphi$ is convex, $D\vphi$ is Lipschitz continuous and $H(x,p)\equiv H(p)$ satisfies Assumption \ref{ass:H}. 
\begin{enumerate}
\item[(i)] Suppose that there exists $c_0<1$ such that 
\begin{equation}\label{ineq:pointwise}
D\psi(y)\cdot G'(\sigma)M(t,y,\sigma)D\vphi(y)\le c_0
\end{equation} 
for all $(t,y,\sigma)\in [0,T]\times\R^d\times \range(\sigma_0)$. Then our data satisfy all the assumptions on this section, and in particular the monotonicity condition imposed in Assumption \ref{eq:monotonicity}.
\item[(ii)] If instead, there exists $c_0>1$ such that $D\psi(y)\cdot G'(\sigma)M(t,y,\sigma)D\vphi(y)\ge c_0$ for all $(t,y,\sigma)\in [0,T]\times\R^d\times \range(\sigma_0)$, then our data satisfy the anti-monotonicity condition described in Remark \ref{rmk:anti}(2).
\end{enumerate}
\item Let $\sigma_0:\sP_2(\R^d)\to\R$ be given by $\sigma_0(m):= G\left(\int_{\R^d}\psi(y)\dif{m}(y)\right)$. Suppose that $G:\R\to\R$ is continuously differentiable, nonnegative, bounded with bounded derivative. Suppose that $\psi:\R^d\to\R$ is continuously differentiable with $D\psi$ having at most linear growth at infinity. Suppose that $\vphi$ and $H$ are as in point (1) of this lemma.
\begin{enumerate}
\item[(i)] Suppose that there exists $c_0<1$ such that $D\psi(y)\cdot G'(s)M(t,y,\sigma)D\vphi(y)\le c_0$ for all $(t,y,\sigma,s)\in [0,T]\times\R^d\times \range(\sigma_0)\times\R$, where the matrix $M$ is defined in \eqref{def:M}. Then our data satisfy all the assumptions on this section, and in particular the monotonicity condition imposed in Assumption \ref{eq:monotonicity}.
\item[(ii)] If instead, there exists $c_0>1$ such that $D\psi(y)\cdot G'(s)M(t,y,\sigma)D\vphi(y)\ge c_0$ for all $(t,y,\sigma,s)\in [0,T]\times\R^d\times \range(\sigma_0)\times\R$, then our data satisfy the anti-monotonicity condition described in Remark \ref{rmk:anti}(2).
\end{enumerate}
\end{enumerate}
\end{lemma}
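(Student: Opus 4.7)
The proof is essentially a verification exercise: the chain-rule identity \eqref{mon:after_chain} together with the explicit formulas \eqref{eq:case1} (Case 1) and \eqref{eq:case2} (Case 2) already derived above reduce Assumption \ref{eq:monotonicity} to an integrated inequality against the probability measure $m_{t,\sigma} := x^*(t,\cdot,\sigma,0)_\sharp m$. Since the pointwise (i) hypotheses are precisely bounds on the respective integrands, and the (ii) hypotheses their reverse, the monotonicity and anti-monotonicity conclusions will follow immediately by integration against a probability measure. What remains is to verify that the blanket hypotheses of the section (boundedness of $\sigma_0$, Assumption \ref{ass:sigma_0}, and Assumption \ref{ass:v_C11}) indeed hold under the lemma's hypotheses.

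Boundedness of $\sigma_0$ is trivial in both cases (from $\|\psi\|_\infty$ in Case (1) and from $\|G\|_\infty$ in Case (2)). For Assumption \ref{ass:sigma_0}, in Case (1) I would note $\frac{\delta\sigma_0}{\delta m}(m,y) = \psi(y)$ with $D_y$-gradient $D\psi(y)$, both jointly continuous and uniformly bounded by hypothesis; in Case (2) I would compute $\frac{\delta\sigma_0}{\delta m}(m,y) = G'\!\left(\int\psi\,\dif m\right)\psi(y)$ with gradient $G'\!\left(\int\psi\,\dif m\right)D\psi(y)$, deriving joint continuity from the $C^1$ regularity of $G$ and continuity of $m\mapsto\int\psi\,\dif m$ on $\sP_2(\R^d)$ (which requires at most quadratic growth of $\psi$, supplied by the linear-growth condition on $D\psi$), and $L^2_m$ integrability of the gradient from the same growth condition combined with finite second moments. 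For Assumption \ref{ass:v_C11} I would appeal to the remark following it: since $H = H(p)$, the Lagrangian $L = H^*$ is convex in $w$; convexity of $\vphi$ together with nonnegativity of $G$ on $\range(\sigma_0)$ (resp.\ nonnegativity of $\sigma\in\range(\sigma_0)\subseteq[0,\infty)$ in Case (2)) gives convexity of $g(\cdot,\sigma)$, and $D^2_{xx}g$ is uniformly bounded by the Lipschitz estimate on $D\vphi$ together with boundedness of $G$ on $\range(\sigma_0)$. This delivers the desired uniform $C^{1,1}_{\rm loc}$ regularity of $v$, and with it the invertibility of $I_d + tD^2H(D_xg)D^2_{xx}g$ on $[0,T]\times\R^d\times\range(\sigma_0)$ (cf.\ \cite[Corollary 1.5.5]{cannarsa2004semiconcave}), so that $M(t,y,\sigma)$ in \eqref{def:M} is well defined and the implicit-function computation underlying \eqref{eq:case1} and \eqref{eq:case2} is fully justified.

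With these preparations in place the conclusion is immediate: integrating the pointwise bound in (1)(i) against the probability measure $m_{t,\sigma}$ yields $\partial_\sigma\Sigma_0(\sigma,t,m) \le c_0$ via \eqref{eq:case1}, which is Assumption \ref{eq:monotonicity}; the same argument applied to \eqref{eq:case2} handles (2)(i), and the anti-monotonicity statements in (1)(ii) and (2)(ii) follow by reversing inequalities throughout. I do not foresee a substantive obstacle: the analytic heart of the argument---differentiating the implicit equation for $x^*$ in $\sigma$ and extracting the formula for $M$---has already been carried out in the text, so the proof reduces to a careful but routine bookkeeping of hypothesis checks, with the one mildly delicate point (invertibility of the matrix in \eqref{def:M}) already handled by the regularity step above.
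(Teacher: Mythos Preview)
Your proposal is correct and follows the same approach as the paper, which simply states that the lemma is ``immediate by the previous computations, and in particular the monotonicity assumption is a consequence of \eqref{eq:case1} and \eqref{eq:case2}.'' You are actually more thorough than the paper in explicitly verifying the auxiliary hypotheses (boundedness of $\sigma_0$, Assumption~\ref{ass:sigma_0}, Assumption~\ref{ass:v_C11}), which the paper leaves implicit.
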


\begin{proof}
The proof of this lemma is immediate by the previous computations, and in particular the monotonicity assumption is a consequence of \eqref{eq:case1} and \eqref{eq:case2}.
\end{proof}

\begin{remark}
\begin{enumerate}
\item In the statement of the above lemma, in inequality \eqref{ineq:pointwise} (and similarly in all the similar inequalities) the constant $c_0$ a priori might depend on the time horizon $T$. However, clearly, this monotonicity condition in general is in fact independent of time. Indeed, as we have discussed above, by the convexity of $H$ and $g(\cdot,\sigma)$,  \cite[Corollary 1.5.5]{cannarsa2004semiconcave} yields the invertibility of the matrix $I_d+t D^2H(D_xg(z,\sigma))D^2_{xx}g(z,\sigma),$
independently of $t>0$. Moreover, since $D^2H$ and $D^2_{xx}g(\cdot,\sigma)$ are bounded, for large $t>0$, $M(t,\cdot,\sigma)$ is of constant order (for large $t$) for all $\sigma$.
\item As for a concrete example, consider $H(p)=\frac12|p|^2$, $\vphi(x)=\frac12|x|^2$, $\psi$ bounded, let $G$ be bounded below by a positive constant and increasing on the interval $[\min\psi,\max\psi]$, and $D\psi(y)\cdot y\ge 0$ (for instance, $\psi(y)=\arctan(|y|^2)$ satisfies these conditions). In this case, the assumptions of Lemma \ref{lem:ex}(1)(i) are fulfilled, and hence all the assumptions of this section are fulfilled.
\item In general the class of examples considered in \eqref{ex:g} do not possess a potential structure. Indeed, the function $g$ is derived from a potential if and only if $\vphi$ and $\psi$ are proportional. Moreover, as is detailed in \cite{graber2022unique}, this class of examples is in general in dichotomy with both the LL monotonicity and displacement monotonicity (and the related anti-monotonicity conditions as well). 
\end{enumerate}
\end{remark}

\subsection{Main result}
	
	\begin{theorem}\label{thm:transport_deterministic} 
	Let the data satisfy Assumptions \ref{ass:H}, \ref{ass:v_C11}, \ref{eq:monotonicity} and \ref{ass:sigma_0}.
		Then Equation \eqref{eq:transport} has a unique classical solution $\sigma$, and $\sigma(t,m)$ is the unique mean field Nash equilibrium for every $m \in \sP_2(\R^d)$ and $t > 0$.
	\end{theorem}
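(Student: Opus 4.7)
The plan is to construct $\sigma(t,m)$ as the unique solution of the fixed-point equation \eqref{eq:equil cond1}, verify it has the regularity required to be a classical solution of \eqref{eq:transport}, and then obtain uniqueness via propagation along characteristics.

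\emph{Existence of the equilibrium.} For each $(t,m) \in [0,T] \times \sP_2(\R^d)$, consider the scalar map $\Phi(\sigma) := \sigma - \Sigma_0(\sigma,t,m)$. By Assumption \ref{eq:monotonicity}, $\Phi'(\sigma) \ge 1 - c_0 > 0$, and boundedness of $\sigma_0$ (hence of $\Sigma_0$) makes $\Phi$ surjective on $\R$. Thus $\Phi$ has a unique zero, which we denote $\sigma(t,m)$; by \eqref{eq:equil cond1} this is the unique Nash equilibrium at $(t,m)$. Joint continuity of $\Sigma_0$ in $(\sigma,t,m)$, inherited from the Lipschitz dependence of the optimal flow $x^*$ on $(x,\sigma)$ (granted by Assumptions \ref{ass:H} and \ref{ass:v_C11}) together with continuity of $\sigma_0$, combined with the implicit function theorem, yields joint continuity of $(t,m) \mapsto \sigma(t,m)$.

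\emph{Regularity in $m$ and $t$.} To obtain the Wasserstein gradient $D_m \sigma(t,m)$ and the time derivative $\partial_t \sigma(t,m)$, differentiate the identity $\sigma(t,m) = \sigma_0\del{x^*(t,\cdot,\sigma(t,m),0)_\sharp m}$ implicitly. Using Assumption \ref{ass:sigma_0} to differentiate $\sigma_0$ under the pushforward, together with the chain rule through the Jacobians of $x^*$ with respect to $x$ and $\sigma$, produces an affine equation for $D_m \sigma(t,m)(\cdot)$ whose invertibility is precisely guaranteed by $\partial_\sigma \Sigma_0 \le c_0 < 1$. Joint continuity of the resulting gradient in $(m,y)$ follows from the corresponding property of $D_m \sigma_0$ and the uniform Lipschitz control on $x \mapsto D_x v(t,x,\sigma)$ from Assumption \ref{ass:v_C11}; an analogous argument yields $\partial_t \sigma$.

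\emph{PDE and uniqueness.} Given $(t,m)$, fix $X \in \bb{H}$ with $X_\sharp \bb{P} = m$ and solve \eqref{eq:characteristics} with terminal data $X(t) = X$ at parameter value $\sigma(t,m)$, setting $\mu_s := X(s)_\sharp \bb{P}$. The fixed-point characterization together with uniqueness of the optimal flow implies that $\sigma_0(\mu_0)$ solves the equilibrium equation at every $(s,\mu_s)$ for $s \in [0,t]$, so $\sigma(s,\mu_s) = \sigma_0(\mu_0)$ is constant in $s$. Differentiating at $s = t$ in the Wasserstein sense, using the regularity established above, produces exactly \eqref{eq:transport}. Conversely, if $\tilde\sigma$ is any classical solution, running the same characteristic ODE at parameter $\tilde\sigma(t,m)$ gives $\frac{d}{ds}\tilde\sigma(s,\mu_s) = 0$ along the lifted equation \eqref{eq:transport lifted}, whence $\tilde\sigma(t,m) = \sigma_0(\mu_0)$ where $\mu_0 = x^*(t,\cdot,\tilde\sigma(t,m),0)_\sharp m$. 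This says $\tilde\sigma(t,m)$ satisfies \eqref{eq:equil cond1}, and the fixed-point uniqueness from the first step forces $\tilde\sigma = \sigma$.

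\emph{Main obstacle.} The delicate step is the implicit Wasserstein differentiation: one must differentiate through a pushforward that depends on $m$ both directly and through $\sigma(t,m)$ itself, and verify that the resulting Wasserstein gradient is jointly continuous in $(m,y)$ strongly enough for \eqref{eq:transport} to hold pointwise in the classical sense. Everything else is method of characteristics combined with the invertibility of $\mathrm{id} - \Sigma_0(\cdot,t,m)$ provided by the new monotonicity condition.
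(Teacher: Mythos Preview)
Your proposal is correct and follows essentially the same route as the paper: unique solvability of the fixed point \eqref{eq:equil cond1} via the lower bound $1-c_0$ on the derivative of $\sigma \mapsto \sigma - \Sigma_0(\sigma,t,m)$, implicit differentiation of the fixed-point identity to obtain $D_m\sigma$ (the paper does this in two steps, first computing $\frac{\delta \Sigma_0}{\delta m}$ explicitly and then passing to $\frac{\delta \sigma}{\delta m}$ via the scalar factor $(1-\partial_\sigma \Sigma_0)^{-1}$), and finally constancy along characteristics to recover the transport equation. One minor addition on your side: you include an explicit converse argument for uniqueness of classical solutions to \eqref{eq:transport} by running characteristics from an arbitrary classical solution $\tilde\sigma$ back to the fixed-point equation, whereas the paper's proof leaves this direction implicit.
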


	\begin{proof}
		\firststep For any $m \in \sP_2(\R^d)$ and $t > 0$, Equation \eqref{eq:monotonicity} shows that
		\begin{equation}
			\sigma \mapsto \sigma - \Sigma_0(\sigma,t,m)
		\end{equation}
		has a derivative bounded below by $1-c_0 > 0$ and is therefore an invertible function on $\bb{R}$.
		Thus, we can define $\sigma(t,m)$ to be the unique solution to $\sigma = \Sigma_0(\sigma,t,m)$, which is precisely the equilibrium condition \eqref{eq:equil cond1}.
		
		\nextstep Let us show that $\Sigma_0(\sigma,t,m)$ is Lipschitz continuous and differentiable with respect to the $m$ variable.
		Using the regularity of $\sigma_0$, we have
		\begin{equation}
			\begin{split}
				\Sigma_0(\sigma,t,\tilde m) - \Sigma_0(\sigma,t,m)
			&= \int_0^1 \int_{\bb{R}^d} \vd{\sigma_0}{m}\del{x^*(t,\cdot,\sigma,0)_\sharp m_\lambda,x}\dif \del{x^*(t,\cdot,\sigma,0)_\sharp \del{\tilde m - m}}(x)\dif \lambda\\
			&= \int_0^1 \int_{\bb{R}^d} \vd{\sigma_0}{m}\del{x^*(t,\cdot,\sigma,0)_\sharp m_\lambda,x^*( t,x,\sigma,0)}\dif  \del{\tilde m - m}(x)	\dif \lambda		
			\end{split}
		\end{equation}
		where $m_\lambda = \lambda \tilde m + (1-\lambda)m$.
		It follows that $\Sigma_0$ has a linear derivative with respect to $m$ given by
		\begin{equation}
			\vd{\Sigma_0}{m}(\sigma,t,m,x) = \vd{\sigma_0}{m}\del{x^*(t,\cdot,\sigma,0)_\sharp m,x^*( t,x,\sigma,0)}.
		\end{equation}
		This is differentiable with respect to $x$ with
		\begin{equation}
			D_x\vd{\Sigma_0}{m}(\sigma,t,m,x) = D_m\sigma_0\del{x^*(t,\cdot,\sigma,0)_\sharp m,x^*( t,x,\sigma,0)}D_x x^*( t,x,\sigma,0),
		\end{equation}
		where $D_x x^*( t,x,\sigma,0)$ is well-defined and bounded, by the regularity of $H$ and $v$.
		Appealing to \cite[Proposition 5.48]{carmona2018probabilistic}, we see that $\Sigma_0$ has a Wasserstein derivative $D_m \Sigma_0 = D_x\vd{\Sigma_0}{m}$ with respect to $m$, which is also bounded.
		It immediately follows that it is Lipschitz with respect to $m$.
		
		\nextstep We now establish that $\sigma(t,m)$ is differentiable with respect to $m$.
		We claim that
		\begin{equation} \label{eq:sigma lin der}
			\vd{\sigma}{m}(t,m,x) = \del{1 - \partial_\sigma \Sigma_0\del{\sigma(t,m),t,m}}^{-1}\vd{\sigma_0}{m}\del{m,x^*\del{t,x,\sigma(t,m),0}}.
		\end{equation}
		For this, start by defining
		\begin{equation}
			\rho(\sigma,t,m) := \sigma - \Sigma_0(\sigma,t,m).
		\end{equation}
		By the previous step and the assumption on $\Sigma_0$, $\rho$ is differentiable in both $\sigma$ and $m$, and we have
		\begin{equation} \label{eq:sigma differences}
			\begin{split}
				\rho\del{\sigma(t,\tilde{m}), t, \tilde{m}} - \rho\del{\sigma(t,m),t,m} 
				&= \int_0^1 \partial_\sigma\rho\del{\sigma_\lambda,t,{m}_\lambda}\del{\sigma(t,\tilde{m}) - \sigma(t,m)} \dif \lambda\\
				&\quad -  \int_0^1\int_{\bb{R}^d} \vd{\Sigma_0}{m}\del{\sigma_\lambda,t,{m}_\lambda,x}\dif(\tilde{m} - m)(x)\dif \lambda
			\end{split}
		\end{equation}
		where $\sigma_\lambda := \lambda\sigma(t,{m}) + (1-\lambda)\sigma(t,\tilde{m})$ and $m_\lambda = \lambda \tilde m + (1-\lambda)m$.
		By definition of the function $\sigma(t,m)$,
		$$
			\rho\del{\sigma(t,\tilde{m}), t,\tilde{m}} = \rho\del{\sigma(t,m),t,m} = 0.
		$$
		Moreover, $\partial_\sigma\rho = 1 - \partial_\sigma\Sigma_0 \leq 1 - c_0 > 0$.
		We deduce from \eqref{eq:sigma differences} that
		\begin{align*}
			\abs{\sigma(t,\tilde{m}) - \sigma(t,m)}
			&\leq \frac{1}{1-c_0}\abs{\int_0^1\int_{\bb{R}^d} \vd{\Sigma_0}{m}\del{\sigma_\lambda,t,{m}_\lambda,x}\dif(\tilde{m} - m)(x)\dif \lambda}
			\leq \frac{\sup \abs{D_m \Sigma_0}}{1-c_0}W_1(\tilde m,m)\\
			&\leq \frac{\sup \abs{D_m \Sigma_0}}{1-c_0}W_2(\tilde m,m),
		\end{align*}
		and thus $\sigma(t,m)$ is $W_2$-Lipschitz continuous with respect to $m$.
		Then \eqref{eq:sigma differences} implies further that
		\begin{equation*}
			\del{1- \partial_\sigma \Sigma_0\del{\sigma(t,m),t,m}}\del{\sigma(\tilde{m},t) - \sigma(t,m)} = \int_{\bb{R}^d} \vd{\Sigma_0}{m}\del{\sigma(t,m),t,m,x}\dif(\tilde{m} - m)(x) 
			+ o\del{W_2(\tilde m,m)},
		\end{equation*}
		from which we deduce that $\sigma$ has a linear derivative with respect to $m$ given by \eqref{eq:sigma lin der}.
		We can see that it is differentiable with respect to $x$ with bounded derivative, from which it follows that $\sigma(t,m)$ has a bounded Wasserstein gradient with respect to $m$.
		
		\nextstep
		Fix $t,m,$ and $\sigma = \sigma(t,m)$.
		Set $\mu(s) = x^*(t,\cdot,\sigma,s,0)_\sharp m$.
		By the flow property, we have
		\begin{equation}
			x^*\del{\tau,x^*\del{ t,x,\sigma,\tau},\sigma,s} = x^*\del{ t,x,\sigma,s} \quad \forall 0 \leq s \leq \tau \leq t.
		\end{equation}
		Thus
		\begin{equation}
			\sigma_0\del{x^*(t,\cdot,\sigma,0)_\sharp m} = \sigma_0\del{x^*(\tau,\cdot,\sigma,0)_\sharp \mu(\tau)} \quad \forall 0 \leq \tau \leq t,
		\end{equation}
		from which we deduce
		\begin{equation}
			\sigma(t,m) = \sigma\del{\tau,\mu(\tau)} \quad \forall 0 \leq \tau \leq t.
		\end{equation}
		Fix $0 < \tau < t$.
		Set $\mu_\lambda = \lambda \mu(\tau) + (1-\lambda) m$.
		We have
		\begin{equation}
			\begin{split}
				\sigma(t,m) - \sigma(\tau,m) &= \sigma(\tau,\mu(\tau)) - \sigma(\tau,m)\\
				&= \int_0^1 \int_{\bb{R}^d} \vd{\sigma}{m}(\tau, \mu_\lambda,x)\dif \del{\mu(\tau) - m}(x)\dif \lambda\\
				&= \int_0^1 \int_{\bb{R}^d} \del{\vd{\sigma}{m}(\tau, \mu_\lambda,x^*( t,x,\sigma,\tau)) - \vd{\sigma}{m}(\tau, \mu_\lambda,x)}\dif m(x)\dif \lambda\\
				&= -\int_0^1 \int_{\bb{R}^d} \int_\tau^t  D_m \sigma(\tau, \mu_\lambda,x^*( t,x,\sigma,s)) \cdot \partial_s x^*( t,x,\sigma,s) \dif s\dif m(x)\dif \lambda.
			\end{split}
		\end{equation}
		By the regularity we have established, recalling the flow satisfied by $x_0^*$, it follows that $\sigma$ is differentiable with respect to $t$, and after dividing by $(t - \tau) \to 0$ we get
		\begin{equation}
			\partial_t \sigma(t,m) = - \int_{\bb{R}^d}D_m \sigma(t,m,x) \cdot D_p H\del{x,D_x v\del{t,x,\sigma(t,m)}}\dif m(x),
		\end{equation}
		which is the transport equation.
	\end{proof}

\begin{corollary}\label{cor:trans_master}
Suppose that $g(x,\cdot)\in C^1(\R)$, uniformly with respect to the $x$-variable. Suppose that the assumptions of Theorem \ref{thm:transport_deterministic} are in place. The unique classical solution to \eqref{eq:transport} provides the unique classical solution to the master equation \eqref{eq:master equation}.
\end{corollary}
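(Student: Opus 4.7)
The natural strategy is to take $u(t,x,m) := v\del{t,x,\sigma(t,m)}$ as in \eqref{def:master_function} and turn the formal computation already sketched after \eqref{eq:correspondence} into a rigorous argument; uniqueness will then follow by a characteristics argument that invokes the MFG uniqueness supplied by Theorem \ref{thm:transport_deterministic}.

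\textbf{Existence.} First I would assemble enough regularity to justify the chain rule formulas in \eqref{eq:correspondence}. Theorem \ref{thm:transport_deterministic} gives that $\sigma$ is $C^1$ in $t$ and fully $\s{C}^1$ in $m$ with bounded and jointly continuous Wasserstein gradient $D_m\sigma$; Assumption \ref{ass:v_C11} gives $v(\cdot,\cdot,\sigma)\in C^{1,1}_{\rm loc}$ with $D_x v$ uniformly Lipschitz in $x$; and the new hypothesis that $g(x,\cdot)\in C^1(\R)$ uniformly in $x$, combined with a standard differentiation-with-respect-to-a-parameter argument for Hamilton--Jacobi equations (differentiating along the unique optimal flow produced by Assumption \ref{ass:v_C11}), yields that $\sigma\mapsto v(t,x,\sigma)$ is $C^1$ with continuous $\partial_\sigma v$. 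The chain rule then shows that $u$ is $C^1$ in $(t,x)$ and fully $\s{C}^1$ in $m$, and substituting \eqref{eq:correspondence} into the left-hand side of \eqref{eq:master equation} decomposes it (at $\sigma = \sigma(t,m)$) as
\begin{equation*}
\big[\partial_t v + H(x,D_x v)\big]\big|_{(t,x,\sigma)} + \partial_\sigma v(t,x,\sigma)\cdot \del{\partial_t \sigma(t,m) + \int_{\R^d} D_m \sigma(t,m)(y)\cdot D_p H\del{y,D_x v(t,y,\sigma)}\dif m(y)}.
\end{equation*}
The first bracket vanishes by the HJB equation \eqref{eq:HJ} and the parenthetical term vanishes by the transport equation \eqref{eq:transport}; the initial datum $u(0,x,m) = g(x,\sigma_0(m))$ is immediate.

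\textbf{Uniqueness.} This is the main obstacle. Let $\bar u$ be any classical solution with the prescribed initial condition, fix $(t,x,m)$, and solve the backward continuity equation
\begin{equation*}
\partial_s \mu_s + \nabla\cdot\del{D_p H\del{y,D_x \bar u(s,y,\mu_s)}\mu_s} = 0, \qquad \mu_t = m,
\end{equation*}
on $[0,t]$; its velocity field is Lipschitz in $y$ by the $C^{1,1}_{\rm loc}$-regularity of $\bar u$, so $\mu_s$ is uniquely determined. Setting $\bar v(s,y) := \bar u(s,y,\mu_s)$ and applying the Wasserstein chain rule along the flow (valid because $D_m \bar u$ is jointly continuous), the nonlocal term in the master equation cancels exactly with the contribution from $\partial_s \mu_s$, leaving $\partial_s \bar v + H(y,D_y \bar v) = 0$ with $\bar v(0,y) = g(y,\sigma_0(\mu_0))$. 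Hence $(\bar v,\mu)$ is a solution of the MFG system \eqref{intro:MFG} at horizon $t$ with terminal measure $m$, i.e.~a Nash equilibrium in the sense of Definition \ref{def:equilibrium}. By the uniqueness statement in Theorem \ref{thm:transport_deterministic}, this equilibrium is pinned down by $\sigma_0(\mu_0) = \sigma(t,m)$, so $\bar v(s,y) = v(s,y,\sigma(t,m))$. Evaluating at $s = t$, $y = x$ gives $\bar u(t,x,m) = v(t,x,\sigma(t,m)) = u(t,x,m)$, completing the proof.
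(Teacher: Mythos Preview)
Your existence argument is exactly the route the paper takes: define $u(t,x,m)=v(t,x,\sigma(t,m))$, observe that the extra hypothesis $g(x,\cdot)\in C^1$ makes $v$ continuously differentiable in the parameter $\sigma$, and then the chain rule identities \eqref{eq:correspondence} turn the combination of \eqref{eq:HJ} and \eqref{eq:transport} into \eqref{eq:master equation}. The paper's own proof is in fact only this existence verification; it does not spell out a separate uniqueness argument and simply concludes with ``the result follows.''

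Your uniqueness paragraph therefore goes beyond what the paper does, supplying the standard characteristics argument (evaluate a putative solution $\bar u$ along the flow of its own continuity equation, recognize the resulting pair as a Nash equilibrium, and invoke the equilibrium uniqueness from Theorem \ref{thm:transport_deterministic}). The strategy is correct, but there is one unjustified step: you assert that the velocity $y\mapsto D_pH(y,D_x\bar u(s,y,\mu_s))$ is Lipschitz ``by the $C^{1,1}_{\rm loc}$-regularity of $\bar u$.'' Nothing in the notion of \emph{classical} solution gives $\bar u$ this much regularity in $x$; Assumption \ref{ass:v_C11} is about $v$, not about an arbitrary competitor $\bar u$. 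To make the argument rigorous you should either (i) state that uniqueness is meant within the class of solutions with $D_x\bar u$ Lipschitz in $x$ (which is the usual convention in the master-equation literature), or (ii) replace the appeal to Lipschitz flow by a weaker well-posedness result for the continuity equation. With that caveat addressed, your proof is a legitimate and more detailed completion of the paper's sketch.
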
	

\begin{proof}
Since $\sigma$ is a classical solution to the transport equation \eqref{eq:transport}, the statement of this corollary simply follows from the representation formula \eqref{def:master_function}, by the formal identities \eqref{eq:correspondence}, as long as $v$ is continuously differentiable with respect to the $\sigma$-variable. Since $v$ is supposed to be a $C^{1,1}$ classical solution to the Hamilton--Jacobi equation \eqref{eq:HJ}, and $g$ is assumed to be continuously differentiable with respect to the parameter $\sigma$, this property will also be inherited by $v$. The result follows.
\end{proof}

We conclude this section by remarking that all of these arguments can be more or less straightforwardly adapted to \emph{systems} of transport equations, i.e.~with the variable $\sigma$ taking values in $\s{X} = \bb{R}^n$ for $n > 1$.
For this it suffices to replace Assumption \ref{eq:monotonicity} with the hypothesis that $c_0 I_n - \Sigma_0(\sigma,t,m)$ be a differentiable monotone vector field for some $c_0 < 1$.
Then all of the arguments of Theorem \ref{thm:transport_deterministic} go through with standard adaptations.
The examples from Section \ref{sec:examples} can also be generalized to this case with not too much difficulty.

	\section{Weak solutions to the transport equation in the absence of monotonicity}\label{sec:entropy}
	
\subsection{A model problem via conservation laws} \label{sec:conservation laws}
	
	Suppose that the Hamiltonian $H$ depends solely on the momentum variable, i.e. $H(x,p) = H(p)$ and $g(x,\sigma) = f(\sigma)\cdot x$ for some $f:\bb{R} \to \bb{R}^d$. Let $L:\R^d\to\R$ be the Lagrangian associated to $H$, i.e. $L=H^*$.
	The Hopf--Lax formula yields
	\begin{equation} \label{eq:a v conservation}
		x_0^*( t,x,\sigma) = x-t DH(f(\sigma)), \quad v( t,x,\sigma) = t \left\{L(DH(f(\sigma))) - f(\sigma)\cdot DH(f(\sigma))\right\} +f(\sigma) \cdot x.
	\end{equation}
	The transport equation \eqref{eq:transport} becomes
	\begin{equation} \label{eq:conservation}
		\left\{
		\begin{array}{ll}
			\partial_t \sigma(t,m) + \ds\int_{\R^d}DH\del{f(\sigma(t,m))} \cdot D_m \sigma(t,m)(\xi) \dif m(\xi) = 0, & {\rm{in}}\ (0,T)\times\sP_2(\R^d),\\
			\sigma(0,m) = \sigma_0(m), & {\rm{in}}\ \sP_2(\R^d).
		\end{array}
		\right.
	\end{equation}
	Define the following ``divergence'' operator: for any $G:\sP_2(\bb{R}^d) \to \bb{R}^d$, set
	\begin{equation} \label{def:divergence}
		\operatorname{div}_m G(m)(\xi) = \sum_{j=1}^d D_m G_j(m)(\xi) \cdot e_j,
	\end{equation}
	where $(e_1,\ldots,e_d)$ are the standard basis vectors in $\bb{R}^d$.
	Let $F$ be an antiderivative of $DH\circ f$, i.e.~$F'(s) = DH(f(s))$.
	Then \eqref{eq:conservation} can be written in divergence form as
	\begin{equation} \label{eq:conservation div}
		\left\{
		\begin{array}{ll}
			\partial_t \sigma(t,m) + \ds\int_{\R^d}  \operatorname{div}_m \del{F\del{\sigma(t,m)}}(\xi) \dif m(\xi) = 0, & {\rm{in}}\ (0,T)\times\sP_2(\R^d),\\
			\sigma(0,m) = \sigma_0(m), & {\rm{in}}\ \sP_2(\R^d),
		\end{array}
		\right.
	\end{equation}

	To convince the reader that \eqref{eq:conservation div} is indeed a conservation law in infinite dimensions, let us consider the finite-dimensional projection $\sigma_N(t,x) := \sigma_N(t,x_1,\ldots,x_N) := \sigma\del{t,\frac{1}{N}\sum_{j=1}^N \delta_{x_j}}$, where $x=(x_1,\ldots,x_N)$,
	 and define $F_N :\bb{R}^{Nd} \to \bb{R}^{Nd}$ by $F_N = \underbrace{(F,\ldots,F)}_{N-\text{times}}$. 
	Then by plugging empirical measures in for $m$ in \eqref{eq:conservation}, we derive
	\begin{equation}\label{eq:conservation N}
		\partial_t \sigma_N(t,x) + \operatorname{div}\del{F_N\del{\sigma_N(t,x)}} = 0.
	\end{equation}
{Here the divergence operator in the previous equation is the classical divergence operator acting on smooth vector fields $b:\R^{Nd}\to\R^{Nd}$. In particular there is no rescaling factor in terms of $N$ involved. Indeed, let us give some more details on this fact. First, if $\sigma:(0,T)\times\sP_{2}(\R^{d})\to\R$ is smooth in the measure variable, using the previous notation, it is well-known (cf. \cite[Chapter 5]{carmona2018probabilisticII}) that by setting $m^{N}:=\frac{1}{N}\sum_{j=1}^N \delta_{x_j},$ we have
$$
D_{m}\sigma\del{t,m^{N}}(x_{i}) = N D_{x_{i}}\sigma_{N}(t,x_{1},\dots,x_{N}).
$$
Therefore, by the definition $\operatorname{div}_{m}$, we have
\begin{align*}
\int_{\R^d}  \operatorname{div}_m \del{F\del{\sigma(t,m^{N})}}(\xi) \dif m^{N}(\xi) & = \int_{\R^d} \sum_{j=1}^d D_m \del{F_{j}\del{\sigma(t,m^{N})}}(\xi)\cdot e_j \dif m^{N}(\xi) \\
& = \sum_{i=1}^{N}\sum_{j=1}^{d} F_{j}'\del{\sigma_{N}(t,x_{1},\dots,x_{N})}D_{x_{i}}\sigma_{N}(t,x_{1},\dots,x_{N})\cdot e_{j}\\
& = \operatorname{div}\del{F_N\del{\sigma_N(t,x)}}.
\end{align*}
}	
	
	Equation \eqref{eq:conservation N} is a classical scalar conservation law in $Nd$ dimensions.
	By its formal connection with the infinite dimensional equation \eqref{eq:conservation}, we are justified in calling the latter a scalar conservation law in the Wasserstein space.

	In this section, we define weak solutions to a conservation law of the form \eqref{eq:conservation}. It is a natural question, whether one would be able to define weak entropy solutions to \eqref{eq:conservation} by studying the corresponding entropy solutions to \eqref{eq:conservation N} in $\R^{dN}$, and then by proceeding with a limiting procedure as $N\to+\infty$. Such a philosophy, although in a completely different setting, has been recently applied in \cite{cecchin2022weak}. It turns out that in our setting such a study is not necessary, as we can exploit a dimension reduction property, that will allow to define weak entropy solutions to \eqref{eq:conservation} in a direct way. We leave the study of entropy solutions in more general cases, as in \eqref{eq:transport}, to future study. Instead, in this section we study a couple {of} examples and point out a very sobering fact: entropy solutions to the transport equation in general cannot be used to select specific MFG Nash equilibria. Moreover, there are examples where Nash equilibria do not even exist, despite the fact that entropy solutions do.
	
	\subsection{A major dimension reduction}\label{subsec:dim_reduction}
	
	Equation \eqref{eq:conservation} has a very nice structure, in the sense that wave velocities exist in a finite-dimensional subspace, as we will now see.
	Consider the lifted version to the Hilbert space $\bb{H}$.
	\begin{equation} \label{eq:conservation lifted}
	\left\{
	\begin{array}{ll}
		\partial_t \tilde \sigma(t,X) + DH\del{f(\tilde \sigma(t,X)} \cdot \bb{E}[D_X \tilde \sigma(t,X)] = 0, & {\rm{in}}\ (0,T)\times\bb{H},\\ 
		 \tilde\sigma(0,X) = \tilde \sigma_0(X), & {\rm{in}}\ \bb{H}.
	\end{array}
	\right.
	\end{equation}
	Notice that for any $X \in \bb{H}$, $\bb{E}[X]$ is the projection of $X$ onto the $d$-dimensional subspace $\bb{H}_1$ consisting of \emph{constant} random variables, whose orthogonal complement is the subspace $\bb{H}_0$ consisting of mean zero random variables. In this sense, $\bb{H}=\bb{H}_0\oplus\bb{H}_1.$
	
	The PDE in Equation \eqref{eq:conservation lifted} is a nonlinear transport equation where the wave velocity is confined to $\bb{H}_1$, hence characteristics should be constant in the $\bb{H}_0$ component.
	
	This suggests the following solution to \eqref{eq:conservation lifted}.
	For $X \in \bb{H}$ we will write $X = X_0 \oplus x$ where $x = \bb{E}[X]$ is the projection onto $\bb{H}_1 \cong \bb{R}^d$ and $X_0 = X - \bb{E}[X]$ is the projection onto $\bb{H}_0$.
	Then for any $\tilde u:\bb{H} \to \bb{R}$ we will write, by a slight abuse of notation, $\tilde u(X) = \tilde u(X_0,x)$.
	Observe that we have the decomposition for the Fr\'echet derivative
	\begin{equation}
		D\tilde u(X) = D_{X_0}\tilde u(X_0,x) \oplus D_x\tilde u(X_0,x),
	\end{equation}
	where $D_{X_0}\tilde u(X_0,x)$ and $D_x\tilde u(X_0,x)$ are Fr\'echet derivatives on $\bb{H}_0$ and $\bb{H}_1$, respectively.
	Since $\bb{H}_1 \cong \bb{R}^d$, $D_x\tilde u(X_0,x)$ can be taken as the usual finite dimensional derivative with respect to $x \in \bb{R}^d$.
	Moreover, we have
	\begin{equation}
		\bb{E}[D\tilde u(X)] = D_x \tilde u(X_0,x).
	\end{equation}
	Thus \eqref{eq:conservation lifted} becomes
	\begin{equation}
	\left\{
	\begin{array}{ll}
		\partial_t \tilde \sigma(t,X_0,x) + DH\del{f(\tilde \sigma(t,X_0,x))} \cdot D_x \tilde \sigma(t,X_0,x) = 0, & {\rm{in}}\ (0,T)\times\bb{H},\\
		 \tilde\sigma(0,X_0,x) = \tilde \sigma_0(X_0,x), & {\rm{in}}\ \bb{H}.
	\end{array}
	\right.
	\end{equation}
	Under classical assumptions on $DH\circ f$, for each $X_0 \in \bb{H}_0$, there is a unique entropy solution $(t,x) \mapsto \tilde \sigma(t,X_0,x)$ to this equation.
	Formally, this should be the entropy solution to the master equation.
	
	We can also write this directly on the space of probability measures.
\begin{definition}\label{def:decomposition}
	Denote by $\sP_2^0(\bb{R}^d)$ the space of all $m \in \sP_2(\bb{R}^d)$ such that $\ds\int_{\R^d} \xi \dif m(\xi) = 0$.
	For any $m \in \sP_2(\bb{R}^d)$, write $x = \ds\int_{\R^d} \xi \dif m(\xi)$ and $m_0 = (\tau_{-x})_\sharp m \in \sP_2^0(\bb{R}^d)$. Here, for $b\in\R^d$, $\tau_b:\R^d\to\R^d$ stands for the translation map by the vector $b$, i.e. $\tau_b(x)=x+b$.
\end{definition}
\begin{remark}
	Note that $m$ can be uniquely recovered from $x$ and $m_0$ by setting $m = (\tau_x)_\sharp m_0 = \delta_x \ast m_0$ (the convolution of $\delta_x$ and $m_0$).
\end{remark}	
	Now for any $u:\sP_2(\bb{R}^d) \to \bb{R}$ write $u(m) = u(m_0,x)$.
	Then Equation \eqref{eq:conservation} can be written as
	\begin{equation} \label{eq:conservation finite dim}
	\left\{
	\begin{array}{ll}
		\partial_t \sigma(t,m_0,x) + (DH\circ f)\del{\sigma(t,m_0,x)} \cdot D_x \sigma(t,m_0,x) = 0, & {\rm{in}}\ (0,T)\times\sP_2^0(\R^d)\times\R^d,\\ 
		\sigma(0,m_0,x) = \sigma_0(m_0,x), & {\rm{in}}\ \sP_2^0(\R^d)\times\R^d
	\end{array}
	\right.
	\end{equation}
	Note that the general transport equation \eqref{eq:transport} cannot be written in this way because the vector field $y\mapsto D_pH(y,D_xv(t,y,\sigma))$ is not independent of $y$.
	
\begin{definition}\label{def:entropy_sol}	
	We say that $\sigma$ is an \emph{entropy solution} to Equation \eqref{eq:conservation} provided that $(t,x) \mapsto \sigma(t,m_0,x)$ is an entropy solution to Equation \eqref{eq:conservation finite dim} for every $m_0 \in \sP_2^0(\bb{R}^d)$.
\end{definition}

\begin{remark}
We remark that a certain dimension reduction technique has been recently used in \cite{lasry2022dimension} to study both MFG systems and master equations. That approach, although similar in spirit, is completely different from the one that we consider here. In \cite{lasry2022dimension} the authors remain the the realm of classical solutions.
\end{remark}

	\subsubsection{Existence of solutions}\label{subsec:existence}

	In addition to the assumptions of Section \ref{sec:conservation laws}, we also assume the vector-valued function $DH\circ f:\bb{R} \to \bb{R}^d$ has only a one-dimensional range.
	Thus we may replace the vector-valued function $DH\circ f$ with $\bar f\zeta$, i.e. $DH(f(u))=\bar f(u)\zeta,\ \forall \ u\in\R$, where $\bar f:\bb{R} \to \bb{R}$ and $\zeta \in \bb{R}^d$ are given, and we impose (without loss of generality) that $\abs{\zeta} = 1$.  Then we may write \eqref{eq:conservation} as
	\begin{equation} \label{eq:conservation 1}
	\left\{
	\begin{array}{ll}	
			\partial_t \sigma(t,m) + \ds\int_{\bb{R}^d} \bar f\del{\sigma(t,m)}\zeta \cdot D_m \sigma(t,m)(\xi)\dif m(\xi) = 0, & {\rm{in}}\ (0,T)\times\sP_2(\R^d),\\
			\sigma(m,0) = \sigma_0(m), & {\rm{in}}\ \sP_2(\R^d).
	\end{array}
	\right.
	\end{equation}
	Defining $F(u) = \int_0^u \bar f(s)\dif s$ and using definition \eqref{def:divergence}, we may also write \eqref{eq:conservation 1} in ``divergence form'' as
	\begin{equation} \label{eq:conservation 1 div}
	\left\{
	\begin{array}{ll}
			\partial_t \sigma(t,m) + \ds\int_{\bb{R}^d} \operatorname{div}_m\del{F\del{\sigma(t,m)}\zeta}(\xi)\dif m(\xi) = 0, & {\rm{in}}\ (0,T)\times\R^d\\
			\sigma(m,0) = \sigma_0(m), & {\rm{in}}\ \R^d.
	\end{array}
	\right.
	\end{equation}

	The entropy solution to \eqref{eq:conservation 1}, which by definition is the entropy solution to \eqref{eq:conservation finite dim}, can be obtained using the Lax--Oleinik formula, as we will see below.
	As a corollary of the formula, we will see that for a.e.~$x \in \bb{R}^d$, the unique entropy solution gives a mean field Nash equilibrium for every $m$ with mean $x$. The main theorem of this subsection can be formulated as follows.
	
\begin{theorem}\label{thm:entropy_existence}
Assume that $\bar f:\R\to\R$ is {differentiable and} uniformly increasing, i.e. there exists $c_0>0$ such that $\bar f'(s) \geq c_0$ for all $s\in\R$. Assume furthermore that $\sigma_0$ is fully $\s{C}^1$ and Lipschitz continuous with respect to $W_2$. Then, the problem \eqref{eq:conservation 1} has a unique entropy solution in the sense of Definition \eqref{def:entropy_sol}.
\end{theorem}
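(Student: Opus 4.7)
The plan is to prove Theorem \ref{thm:entropy_existence} by exploiting the dimension reduction from Subsection \ref{subsec:dim_reduction} to recast the problem as a one-parameter family of one-dimensional scalar conservation laws, and then invoke the classical Kruzhkov/Lax--Oleinik theory.

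First, I would apply Definition \ref{def:decomposition} to write $m = m_0 \oplus x$ with $x = \int \xi\,\mathrm{d}m(\xi)$ and $m_0\in\sP_2^0(\R^d)$, bringing \eqref{eq:conservation 1} to the form \eqref{eq:conservation finite dim}, which under our hypothesis $DH\circ f = \bar f\, \zeta$ (with $\abs{\zeta}=1$) reads $\partial_t \sigma + \bar f(\sigma)\,\zeta \cdot D_x \sigma = 0$, the parameter being $m_0$. Because the flux is confined to the one-dimensional subspace $\R\zeta$, I would further decompose $\R^d = \R\zeta \oplus \zeta^\perp$, writing $x = y\zeta + x^\perp$ with $y = \zeta\cdot x \in \R$ and $x^\perp \in \zeta^\perp$. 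Setting $u(t,y) := \sigma(t, m_0, y\zeta+x^\perp)$ and treating $(m_0,x^\perp)$ as parameters, the equation becomes the 1D scalar conservation law $\partial_t u + \partial_y F(u) = 0$, with $F'=\bar f$ and initial datum $u_0(y) := \sigma_0(m_0, y\zeta + x^\perp)$.

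Next, I would apply the classical theory for 1D scalar conservation laws. The assumption $\bar f'\geq c_0 > 0$ yields $F''\geq c_0$, hence uniform convexity (and superlinearity) of $F$. The $W_2$-Lipschitz continuity of $\sigma_0$, combined with the obvious bound $W_2(m_0\oplus x, m_0\oplus x')\leq \abs{x-x'}$, gives Lipschitz continuity of $y\mapsto u_0(y)$, uniformly in $(m_0,x^\perp)$. By the Kruzhkov theorem (together with the Lax--Oleinik formula, which applies for uniformly convex flux), there exists a unique entropy solution $u = u[m_0,x^\perp](t,y)$, represented explicitly by
\begin{equation*}
u(t,y) = G'\!\left(\frac{y - y^*(t,y)}{t}\right),
\qquad y^*(t,y) \in \argmin_{z\in\R}\left\{t\,G\!\left(\frac{y-z}{t}\right) + U_0(z)\right\},
\end{equation*}
with $G := F^*$ and $U_0$ an antiderivative of $u_0$; the minimizer is unique for a.e.~$y$. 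I would then set $\sigma(t,m) := u[m_0, x^\perp](t, \zeta\cdot x)$ and verify that $(t,x)\mapsto \sigma(t,m_0,x)$ is a Kruzhkov entropy solution, in $\R^d$, of the conservation law with flux $F(\cdot)\zeta$. This reduces via a standard slicing argument: the Kruzhkov entropy inequalities on $\R^d$ for a flux supported in a single direction decouple along affine lines parallel to $\zeta$, and the 1D entropy inequalities already hold on each slice by construction. Uniqueness in the sense of Definition \ref{def:entropy_sol} is then immediate from the slice-wise 1D uniqueness.

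The step I expect to be the most delicate is the last one, namely justifying rigorously that the multidimensional Kruzhkov entropy solution to a conservation law with one-dimensional flux really is obtained by gluing together the 1D entropy solutions on each affine line parallel to $\zeta$, and that no transverse entropy dissipation enters. This should follow from the Kruzhkov doubling-of-variables technique applied to test functions concentrated in the $\zeta$-direction (together with Fubini), but it merits careful bookkeeping. A secondary technical point is that $\sigma_0$ is only Lipschitz with respect to $W_2$ (hence $u_0$ may be unbounded in $y$), so one must confirm that the minimization defining $y^*$ is well-posed; this follows from the fact that $G$ is superlinear of order strictly greater than one (as $F$ is uniformly convex), which dominates the at-most-linear growth of $u_0$ and hence the at-most-quadratic growth of $U_0$.
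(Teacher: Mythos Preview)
Your proposal is correct and follows the same overall strategy as the paper --- reduce to a finite-dimensional scalar conservation law with uniformly convex flux and invoke the Lax--Oleinik representation --- but the execution is organized differently. The paper first constructs, via the technical Lemma~\ref{lem:antider}, a global ``antiderivative'' $\phi_0$ on $\sP_2(\bb{R}^d)$ satisfying $\int D_m\phi_0(m)(x)\cdot\zeta\,\dif m(x)=\sigma_0(m)$; it then solves the Hamilton--Jacobi equation $\partial_t\phi+F(D_x\phi\cdot\zeta)=0$ on $\R^d$ (with $m_0$ as parameter) by Hopf--Lax, and sets $\sigma=D_x\phi\cdot\zeta$. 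You instead slice one dimension further, treating $(m_0,x^\perp)$ as a parameter and working directly with the one-dimensional conservation law on each affine line $y\mapsto y\zeta+x^\perp$, taking a line-by-line antiderivative $U_0$ of $u_0$. Your route is more elementary in that it bypasses Lemma~\ref{lem:antider} entirely, at the price of the gluing/Fubini argument you flag (which is indeed standard for fluxes supported in a single direction). The paper's route, on the other hand, produces the potential $\phi$ on the full $x$-space in one stroke, and this object --- specifically the minimizer $\alpha(m_0,x,t)$ in \eqref{eq:hopf lax x} --- is reused verbatim in the proof of Theorem~\ref{thm:selection}, so if you also intend to prove that result you will end up reconstructing the same Hopf--Lax minimizer anyway. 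Your secondary concern about unbounded $u_0$ is well taken and your resolution is correct; the paper sidesteps this point by appealing to ``standard arguments'' without comment.
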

	
Before proving this theorem we need the following technical lemma.
	
	\begin{lemma} \label{lem:antider}
		Let $\phi$ be a fully $\s{C}^1$ function on $\sP_2(\bb{R}^d)$ which is Lipschitz continuous with respect to $W_2$, and let $\zeta$ be a fixed unit vector in $\bb{R}^d$.
		Then there exists a fully $\s{C}^1$ function $\psi$ on $\sP_2(\bb{R}^d)$ such that
		\begin{equation}
			\int_{\bb{R}^d} D_m \psi(m)(x)\cdot \zeta \dif m(x) = \phi(m),\ \ \forall\ m\in\sP_2(\R^d).
		\end{equation}
	\end{lemma}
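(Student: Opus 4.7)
The integral $\int D_m\psi(m)(y)\cdot\zeta\,dm(y)$ is precisely the directional derivative of $\psi$ along the translation flow $s\mapsto (\tau_{s\zeta})_\sharp m$, since this flow on $\sP_2(\R^d)$ is generated by the constant velocity field $\zeta$. Hence the task is to find an ``antiderivative'' of $\phi$ along this one-parameter group of translations. The natural transverse hypersurface is $\{m\in\sP_2(\R^d):\zeta\cdot x_m=0\}$, where $x_m:=\int\xi\,dm(\xi)$; every $m$ is a translation by $(\zeta\cdot x_m)\zeta$ of some measure whose mean is in $\zeta^\perp$, following the decomposition in Definition \ref{def:decomposition}, and we integrate $\phi$ along this translation.

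Concretely, the plan is to define
$$\psi(m) := \int_0^{\zeta\cdot x_m} \phi\bigl((\tau_{-s\zeta})_\sharp m\bigr)\,ds,$$
and to compute its Wasserstein gradient directly. First, I would compute the linear derivative $\delta\psi/\delta m$ by Leibniz's rule, exploiting the linearity of the mean functional (whose linear derivative at $y$ is $\zeta\cdot y$) together with the translation equivariance $(\tau_{-s\zeta})_\sharp(m+\eta)=(\tau_{-s\zeta})_\sharp m + (\tau_{-s\zeta})_\sharp\eta$. This yields
$$\frac{\delta\psi}{\delta m}(m)(y) = (\zeta\cdot y)\,\phi\bigl((\tau_{-(\zeta\cdot x_m)\zeta})_\sharp m\bigr) + \int_0^{\zeta\cdot x_m} \frac{\delta\phi}{\delta m}\bigl((\tau_{-s\zeta})_\sharp m\bigr)(y-s\zeta)\,ds,$$
and differentiating in $y$ gives
$$D_m\psi(m)(y) = \phi\bigl((\tau_{-(\zeta\cdot x_m)\zeta})_\sharp m\bigr)\,\zeta + \int_0^{\zeta\cdot x_m} D_m\phi\bigl((\tau_{-s\zeta})_\sharp m\bigr)(y-s\zeta)\,ds.$$

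To verify the required identity, I would integrate $D_m\psi(m)(y)\cdot\zeta$ against $dm(y)$. In the second term, the substitution $z=y-s\zeta$ converts the integration against $m$ into integration against $(\tau_{-s\zeta})_\sharp m$, producing $\int D_m\phi((\tau_{-s\zeta})_\sharp m)(z)\cdot\zeta\,d(\tau_{-s\zeta})_\sharp m(z)$. By the chain rule on $\sP_2(\R^d)$ applied to the scalar function $s\mapsto \phi((\tau_{-s\zeta})_\sharp m)$, this quantity equals $-\tfrac{d}{ds}\phi((\tau_{-s\zeta})_\sharp m)$, so the fundamental theorem of calculus collapses the integral to $\phi(m)-\phi((\tau_{-(\zeta\cdot x_m)\zeta})_\sharp m)$; adding the boundary contribution from the first summand gives exactly $\phi(m)$. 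Full $\sC^1$ regularity of $\psi$ then follows from the joint continuity of $D_m\phi$, the continuity of $\phi$, and the $W_2$-continuity of $m\mapsto x_m$.

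The main obstacle I anticipate is the rigorous justification of Leibniz's rule when deriving the formula for $\delta\psi/\delta m$, because the upper endpoint $\zeta\cdot x_m$ depends on $m$, and one must check that the remainder terms from both the endpoint perturbation and the integrand perturbation are $o(W_2(m,m'))$ uniformly in $s$. The $W_2$-Lipschitz hypothesis on $\phi$ is what controls the boundary remainder, while the fully $\sC^1$ hypothesis handles the pointwise remainder in the integrand; beyond this, the remaining computations are algebraic manipulations of the formulas above.
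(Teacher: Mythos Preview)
Your construction is correct and is exactly the paper's: after the change of variables $u=\zeta\cdot x_m-s$, the paper's $\tilde\psi(X)=\int_0^{\bb E[X\cdot\zeta]}\tilde\phi\bigl(s\zeta+X-\bb E[X\cdot\zeta]\zeta\bigr)\,ds$ becomes your $\psi(m)=\int_0^{\zeta\cdot x_m}\phi\bigl((\tau_{-s\zeta})_\sharp m\bigr)\,ds$. The only difference is that the paper lifts to $\bb H$ and computes the Fr\'echet derivative of $\tilde\psi$ by estimating $\tilde\psi(Y)-\tilde\psi(X)$ directly---so the identity $\bb E[D\tilde\psi(X)\cdot\zeta]=\tilde\phi(X)$ drops out of an orthogonal projection onto $\bb H_\zeta^0$ rather than from the fundamental theorem of calculus, and one never needs to invoke the linear derivative $\delta\phi/\delta m$---whereas you work intrinsically on $\sP_2(\bb R^d)$; both routes are valid and yield the same $D_m\psi$.
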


	\begin{proof}
		Let $\tilde \phi :\bb{H} \to \bb{R}$ be the lifted version of $\phi$, i.e.~$\tilde \phi(X) = \phi(X_\sharp\bb{P})$.
		We denote by $\bb{H}^0_\zeta$ the closed subspace of all $X \in \bb{H}$ such that $\bb{E} [X \cdot \zeta] = 0$.
		For any $X \in \bb{H}$ define
		\begin{equation}
			\tilde \psi(X) := \int_0^{\bb{E} [X \cdot \zeta]} \tilde \phi\del{s\zeta + X - \bb{E} [X \cdot \zeta]\zeta}\dif s,
		\end{equation}
		noting that $X - \bb{E} [X \cdot \zeta]\zeta$ is the projection of $X$ onto $\bb{H}^0_\zeta$.
		
		We first claim that {$\tilde \psi(X)$} depends only on the law of $X$.
		Suppose $X \sim Y$, i.e.~$X$ and $Y$ have the same law.
		Then $\bb{E} [X \cdot \zeta] = \bb{E} [Y \cdot \zeta]$, from which it follows that 
		\begin{equation*}
			s\zeta + X - \bb{E} [X \cdot \zeta]\zeta \sim s\zeta + Y - \bb{E} [Y \cdot \zeta]\zeta \quad \forall s.
		\end{equation*}
		Since $\tilde \phi$ depends only on the law of its argument, the claim follows.
		
		Now we show that $\tilde \psi$ is Fr\'echet differentiable.
		Start with
		\begin{equation}
			\begin{split}
				\tilde \psi(Y) - \tilde \psi(X) &= \int_{\bb{E} [X \cdot \zeta]}^{\bb{E} [Y \cdot \zeta]} \tilde \phi\del{s\zeta + Y - \bb{E} [Y \cdot \zeta]\zeta}\dif s\\
				&\quad + \int_0^{\bb{E} [X \cdot \zeta]} \del{\tilde \phi\del{s\zeta + Y - \bb{E} [Y \cdot \zeta]\zeta} - \tilde \phi\del{s\zeta + X - \bb{E} [X \cdot \zeta]\zeta}}\dif s.
			\end{split}
		\end{equation}
		Using the continuity of $\tilde \phi$ and the fact that $\abs{\bb{E} [(Y-X) \cdot \zeta]} \leq \enVert{X-Y}$, we deduce that the first integral on the right-hand side is equal to \begin{equation}
			\tilde \phi\del{X}\bb{E} [(Y-X) \cdot \zeta]
			+  o(\enVert{X-Y})
		\end{equation} as $Y \to X$.
		As for the second integral, we use the continuity of $D\tilde \phi$ and the fact that (by the property of the projection onto $\bb{H}^0_\zeta$)
		\begin{equation*}
			\enVert{X - Y - \bb{E} [(X-Y) \cdot \zeta]\zeta} \leq \enVert{X-Y},
		\end{equation*}
		to get
		\begin{equation}
			\begin{split}
				\int_0^{\bb{E} [X \cdot \zeta]} &\del{\tilde \phi\del{s\zeta + Y - \bb{E} [Y \cdot \zeta]\zeta} - \tilde \phi\del{s\zeta + X - \bb{E} [X \cdot \zeta]\zeta}}\dif s\\
				&= \int_0^{\bb{E} [X \cdot \zeta]} \bb{E}\sbr{D\tilde \phi\del{s\zeta + X - \bb{E} [X \cdot \zeta]\zeta} \cdot \del{Y - X - \bb{E} [(Y-X) \cdot \zeta]\zeta}}\dif s\\
				& \quad + o(\enVert{X-Y}).
			\end{split}
		\end{equation}
		It follows that $\tilde \psi$ is differentiable with
		\begin{multline}
			D\tilde \psi(X) = \tilde \phi(X)\zeta + \int_0^{\bb{E} [X \cdot \zeta]}
			 D\tilde \phi\del{s\zeta + X - \bb{E} [X \cdot \zeta]\zeta}\dif s
			- \int_0^{\bb{E} [X \cdot \zeta]}\bb{E}\sbr{D\tilde \phi\del{s\zeta + X - \bb{E} [X \cdot \zeta]\zeta} \cdot \zeta}\zeta \dif s,
		\end{multline}
		which can be written more simply as
		\begin{equation}
			D\tilde \psi(X) = \tilde \phi(X)\zeta + \operatorname{proj}_{\bb{H}^0_\zeta} \int_0^{\bb{E} [X \cdot \zeta]}
			D\tilde \phi\del{s\zeta + \operatorname{proj}_{\bb{H}^0_\zeta} X}\dif s.
		\end{equation}
		We note that the Lipschitz continuity of $\phi$ with respect to $W_2$ readily implies the Lipschitz continuity of $\tilde\phi$ in $\mathbb H$. Because $D\tilde \phi$ and $\tilde\phi$ are Lipschitz continuous, so is $D\tilde \psi(X)$. 
		
		Now, let us define $\psi:\sP_2(\R^d)\to\R$ by $\psi(m) := \tilde \psi(X)$ for any $X$ whose law is $m$ (which is well-defined as $\tilde\psi$ depends only on the law of $X$); then $\psi$ is fully $\s{C}^1$.
		Moreover, since
		\begin{equation}
			\bb{E}\sbr{D\tilde \psi(X) \cdot \zeta} = \tilde \phi(X),
		\end{equation}
		we have
		\begin{equation}
			\int_{\bb{R}^d} D_m \psi(m)(x) \cdot \zeta \dif m(x) = \phi(m),
		\end{equation}
		as desired.
	\end{proof}

\begin{proof}[Proof of Theorem \ref{thm:entropy_existence}]

	Let $\phi_0:\sP_2(\R^d)\to\R$ be a fully $\s{C}^1$ function such that 
	$$\int_{\R^d} D_m \phi_0(m)(x) \cdot \zeta \dif m(x) = \sigma_0(m),\ \ \forall m\in\sP_2(\R^d),$$ 
	and let $\tilde \phi_0$ be its lifted version.
	Using the decomposition $m=(m_0,x)$ from Definition \ref{def:decomposition}, define
	\begin{equation} \label{eq:hopf lax x}
		\phi(t,m_0,x) := \inf_{\alpha \in \bb{R}} \cbr{tF^*(\alpha) + \phi_0(m_0,x - t\alpha\zeta)}.
	\end{equation}
	Thus, for any $m_0\in \sP_2^0(\bb{R}^d),$ $\phi$ is the unique viscosity solution of
	\begin{equation}
	\left\{
	\begin{array}{ll}
		\partial_t\phi(t,m_0,x) + F\del{D_x \phi(m_0,x,t) \cdot \zeta} = 0, & {\rm{in}}\ (0,T)\times\R^d,\\ 
		\phi(0,m_0,x) = \phi_0(m_0,x), & {\rm{in}}\ \bb{R}^d.
	\end{array}
	\right.
	\end{equation}
	By standard arguments (cf.~\cite[Section 3.4]{evans10}), it follows that $\sigma(t,m_0,x) := D_x \phi(t,m_0,x) \cdot \zeta$ is the unique entropy solution of
		\begin{equation} \label{eq:conservation 1x}
		\left\{	
		\begin{array}{ll}
			\partial_t \sigma(t,m_0,x) + \bar f\del{\sigma(t,m_0,x)}\zeta \cdot D_x \sigma(t,m_0,x) = 0, & {\rm{in}}\ (0,T)\times\R^d,\\
			\sigma(0,m_0,x) = D_x\phi_0(m_0,x) \cdot \zeta, & {\rm{in}}\ \R^d.
		\end{array}
		\right.
	\end{equation}
	Notice that, by taking $X \sim m$ or equivalently $X_0 \sim m_0$, we have
	\begin{equation}
		D_x \phi_0(m_0,x) \cdot \zeta = D_x \tilde \phi_0(X_0,x)\cdot\zeta
		= \bb{E}[D_X \tilde \phi_0(X) \cdot \zeta] = \int_{\bb{R}^d} D_m \phi_0(m)(\xi) \cdot \zeta \dif m(\xi)
		= \sigma_0(m) = \sigma_0(m_0,x).
	\end{equation}
	Thus Equation \eqref{eq:conservation 1x} is exactly \eqref{eq:conservation finite dim}, so $\sigma(t,m)$ is the unique entropy solution to \eqref{eq:conservation finite dim}.

\end{proof}	
	
\subsubsection{Entropy solutions provide Nash equilibria in the case of smooth initial data and convex flux functions}	
Under the assumptions of the previous subsection, we establish that $\sigma$ does indeed give an equilibrium to the mean field game. {As we could see early on in the manuscript (see \eqref{eq:equil cond single valued}), in our setting MFG Nash equilibria are precisely described by the push forward of the initial measure through the characteristics of the transport equation for $\sigma$. When we have a classical solution to the transport equation, then characteristics are unique and reach everywhere. However, in the case of weak entropy solutions, there can even be regions without any characteristics (due to rarefaction waves). This situation happens if either the initial datum $\sigma_{0}$ has discontinuities or the flux function is non-convex, allowing rarefaction waves to form at later times. In such scenarios, MFG Nash equilibria are not selected by the entropy solution to the transport equation. Here we are using the word ``selection'' to mean that the solution $\sigma(t,m)$ should be equal to a true Nash equilibrium (of which there could be several), which as we have seen corresponds to values of $\sigma_0$ propagated along characteristics. A similar philosophy was present present in the works \cite{cecchin2019concergence, delarue2020selection}, when MFG Nash equilibria selection was given by limits of optimal trajectories to the corresponding $N$-player games. However, if the initial distribution sits on a discontinuity, then the limit of the $N$-player game and the vanishing viscosity should converge to a ``randomization'' of the two extremal MFG Nash equilibria. Our approach, although similar, is different from the point of view that there is no involvement of the $N$-player game, but rather a dimension reduction could be applied for the new transport equation for $\sigma$, due to the special structure of the data $H$ and $g$ in the MFG.}
\begin{theorem}\label{thm:selection}
Suppose that we are in the setting of the previous subsection and in particular there exists $c_0>0$ such that $\bar f(s)\ge c_0$ for all $s\in\R$. Suppose moreover that $\sigma_0$ is fully $\s{C}^1$ and Lipschitz continuous with respect to $W_2$. Then $\sigma$, the unique entropy solution to \eqref{eq:conservation}, selects a Nash equilibrium for the mean field game.
\end{theorem}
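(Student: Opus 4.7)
The plan is to extract the Nash equilibrium directly from the Hopf--Lax representation of the entropy solution. I would first reformulate the equilibrium condition as a scalar fixed-point equation: by \eqref{eq:equil cond single valued} and the formula $x^*(t,x,\sigma,0)=x-t\bar f(\sigma)\zeta$ from \eqref{eq:a v conservation}, a Nash equilibrium at $(t,m)$ is any $\sigma\in\R$ satisfying $\sigma = \sigma_0\bigl((\tau_{-t\bar f(\sigma)\zeta})_\sharp m\bigr)$, together with the associated transport plan $\pi=(\mathrm{id},\mathrm{id}-t\bar f(\sigma)\zeta)_\sharp m$. Using the decomposition $m=(m_0,x)$ of Definition \ref{def:decomposition}, the push-forward $(\tau_{-t\bar f(\sigma)\zeta})_\sharp m$ has unchanged centered part $m_0$ and mean $x-t\bar f(\sigma)\zeta$, so the fixed-point condition collapses to
\begin{equation*}
	\sigma = \sigma_0\bigl(m_0,\, x - t\bar f(\sigma)\zeta\bigr).
\end{equation*}

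Next I would verify that the entropy solution $\sigma(t,m_0,x)=D_x\phi(t,m_0,x)\cdot\zeta$, constructed via the Hopf--Lax formula \eqref{eq:hopf lax x} in the proof of Theorem \ref{thm:entropy_existence}, does solve this identity. Under the standing hypotheses $\bar f\ge c_0>0$ and $\bar f'\ge c_0$, the flux $F$ is smooth and strictly convex, and $(F^*)'=\bar f^{-1}$ is well-defined. At any minimizer $\alpha^*$ in $\phi(t,m_0,x) = \inf_\alpha \{tF^*(\alpha)+\phi_0(m_0,x-t\alpha\zeta)\}$, the first-order condition $(F^*)'(\alpha^*)=D_x\phi_0(m_0,x-t\alpha^*\zeta)\cdot\zeta$, combined with the identity $\sigma_0(m)=D_x\phi_0(m_0,x)\cdot\zeta$ already established in Theorem \ref{thm:entropy_existence}, yields $\alpha^*=\bar f\bigl(\sigma_0(m_0,x-t\alpha^*\zeta)\bigr)$. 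The envelope theorem at any differentiability point of $\phi$ then gives $D_x\phi(t,m_0,x)=D_x\phi_0(m_0,x-t\alpha^*\zeta)$, so
\begin{equation*}
	\sigma(t,m_0,x) = \sigma_0\bigl(m_0,\, x - t\bar f(\sigma(t,m_0,x))\zeta\bigr),
\end{equation*}
which is exactly the fixed-point equation of the previous step.

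With the fixed-point identity at hand, the construction of the equilibrium is immediate: setting $T:\R^d\to\R^d$, $T(y):=y-t\bar f(\sigma(t,m))\zeta$, and $\pi:=(\mathrm{id},T)_\sharp m$, one has $(p^1)_\sharp\pi=m$, $y=T(x)=x^*(t,x,\sigma(t,m),0)$ for $\pi$-a.e.~$(x,y)$, and $\sigma_0((p^2)_\sharp\pi)=\sigma_0(T_\sharp m)=\sigma(t,m)$, matching Definition \ref{def:equilibrium}.

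The main obstacle will be the application of the envelope theorem. By strict convexity of $F$ and the $W_2$-Lipschitz regularity of $\sigma_0$, the potential $\phi$ is semiconcave and thus differentiable Lebesgue-almost everywhere in $x$, where the argument is clean. At the exceptional set (shock set), several minimizers $\alpha^*$ may coexist and correspond to distinct genuine Nash equilibria, while the Lax--Oleinik entropy solution assigns a specific value at the shock. I expect the resolution to rest on the pointwise characterization of the Hopf--Lax formula: at every $(t,m_0,x)$ there is at least one minimizer $\alpha^*$, and the chain of identities above associates to it a true equilibrium value; one then verifies that the Lax--Oleinik value of $\sigma$ always coincides with such a realizable minimizer (rather than with a spurious Rankine--Hugoniot average), ensuring that the selection is consistent with the fixed-point equation everywhere.
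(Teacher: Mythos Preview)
Your approach is essentially identical to the paper's: reduce the equilibrium condition to the scalar fixed-point equation $\sigma=\sigma_0(m_0,x-t\bar f(\sigma)\zeta)$ via the decomposition $m=(m_0,x)$, then verify it from the first-order optimality condition in the Hopf--Lax formula \eqref{eq:hopf lax x}, which gives $\bar f^{-1}(\alpha^*)=\sigma_0(m_0,x-t\alpha^*\zeta)$, together with $\sigma(t,m_0,x)=\bar f^{-1}(\alpha^*)$ at differentiability points of $\phi$. This is exactly the argument in the paper.

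Your final paragraph, however, is unnecessary and slightly misdirected. The paper makes no attempt to push the conclusion to the shock set: its proof explicitly states the identity ``for a.e.~$x$'' and stops there, which is consistent with the fact that the entropy solution $\sigma(t,m_0,\cdot)$ is an $L^\infty$ (or BV) object defined only almost everywhere in $x$. There is nothing to ``resolve'' at the exceptional set, and in particular no need to check that the Lax--Oleinik value coincides with a realizable minimizer there; the selection statement is understood in the a.e.~sense. Your instinct that at shock points several minimizers $\alpha^*$ correspond to distinct genuine equilibria is correct, but that observation is orthogonal to the theorem.
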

\begin{proof}
	It is enough to show that for a.e.~$x \in \bb{R}^d$, 
	\begin{equation}
		\sigma(t,m) = \sigma_0\del{x_0^*(\sigma(t,m),\cdot,t)_\sharp m}, \quad \forall m \in \sP_2(\bb{R}^d) \ \text{s.t.} \ \int_{\R^d} \xi \dif m(\xi) = x.
	\end{equation}
	Since we are in the simple case in which Equation \eqref{eq:a v conservation} holds, the equilibrium condition becomes
	\begin{equation}
		\sigma(t,m) = \sigma_0\del{\left(\tau_{-t \bar f\del{\sigma(t,m)} \cdot \zeta}\right)_\sharp m}.
	\end{equation}
	Notice that a push-forward by a constant vector changes only the average $x$ by that vector and leaves $m_0$ unchanged.
	Thus, we simply have to show
	\begin{equation}\label{eq:displacement_of_mean}
		\sigma(t,m_0,x) = \sigma_0\del{m_0,x-t \bar f\del{\sigma(m_0,x,t)}\zeta}. 
	\end{equation}
	Let $\alpha(m_0,x,t)$ be the minimizer in Equation \eqref{eq:hopf lax x}.
	The first-order condition for $\alpha$ implies
	\begin{equation}
		\bar f^{-1}\del{\alpha(m_0,x,t)} = D_x \phi_{0}\del{m_0,x - t\alpha(m_0,x,t)\zeta}\cdot \zeta = \sigma_{0}\del{m_0,x - t\alpha(m_0,x,t)\zeta}
	\end{equation}	
	for a.e.~$x$.
	Since $\sigma(m_0,x,t) = \bar f^{-1}\del{\alpha(m_0,x,t)}$, this is the desired identity.
	The proof is complete.
\end{proof}

\subsection{Entropy solutions via vanishing common noise}	
We suppose that we are in the setting of Subsection \ref{subsec:dim_reduction}. In this subsection we study randomization of the equilibria by adding a stochastic forcing in the direction of the mean of the probability measures. This will correspond to a common noise at the level of the mean field game. A similar philosophy was used in \cite{delarue2020selection, tchuendom2018uniqueness} to restore uniqueness in linear quadratic MFGs. In \cite{delarue2019restoring} Delarue has also studied randomization of the equilibria by adding an infinite dimensional stochastic forcing.
		Indeed, the mean field game considered in this section almost fits the setting of \cite{delarue2019restoring} by removing the measure dependence from the running cost and the dynamics, except that we consider a more general Hamiltonian.
		Delarue also shows that the equilibrium is given by characteristics of a conservation law by taking (in our terminology) $\sigma_0(m) = \int_{\R^d} \xi \dif m(\xi)$ and a quadratic Hamiltonian.
		In the present work we assume a more general structure on the Hamiltonian and, especially, on the scalar function $\sigma_0(m)$.



We consider the problem \eqref{eq:conservation finite dim}, and we consider adding a Brownian noise $(B_\tau)_{\tau\in(-T,0)}$ (with intensity $\varepsilon>0$) in the $x$-direction (which corresponds to the direction of the mean of the measure variable). At the transport equation level this reads as
\begin{equation} \label{eq:vanishing_common}
	\left\{
	\begin{array}{ll}
		\partial_t \sigma(t,m_0,x) + (DH\circ f)\del{\sigma(t,m_0,x)} \cdot D_x \sigma(t,m_0,x) - \varepsilon\Delta_x\sigma(t,m_0,x) = 0, & {\rm{in}}\ (0,T)\times\sP_2^0(\R^d)\times\R^d,\\ 
		\sigma(0,m_0,x) = \sigma_0(m_0,x), & {\rm{in}}\ \sP_2^0(\R^d)\times\R^d.
	\end{array}
	\right.
	\end{equation} 
Under suitable assumptions on $DH\circ f$, this PDE has a unique classical solution for all $\varepsilon>0$; see e.g.~\cite[Theorem V.8.1]{ladyzhenskaia1968linear}. Moreover, if we are in the setting of Theorem \ref{thm:entropy_existence} together with the assumptions that $\bar f$ and $\sigma_0$ are uniformly bounded, the results in \cite{kruvzkov1970first} imply that this solution $\sigma_\varepsilon$ converges, as $\varepsilon\to 0$, to the unique entropy solution $\sigma$, given in Theorem \ref{thm:entropy_existence}. 
{ More precisely, we have that $\sigma_\varepsilon(\cdot,m_0,\cdot) \to \sigma(\cdot,m_0,\cdot)$ in $C([0,T];L^1_{loc}(\bb{R}^d))$ for each fixed $m_0$.}

\begin{comment}
Alpar:[do we need more discussion, details here?]
Jameson:I don't think so.
\end{comment}

It is well-known (cf. \cite{pardoux1999forward}) that the PDE in \eqref{eq:vanishing_common} is strongly related to the system of FBSDEs 
\begin{equation}\label{eq:FBSDE_vanishing}
\left\{
\begin{array}{ll}
x^{t,\xi}_s &= \xi - \ds\int_{-t}^s (DH\circ f)(y^{t,\xi}_\tau)\dif\tau+\sqrt{2\varepsilon}\int_{-t}^s\dif B_\tau,\\
y^{t,\xi}_s &= \sigma_0(m_0,x^{t,\xi}_0) + \ds\int_{s}^0 z^{t,\xi}_\tau\cdot\dif B_\tau,
\end{array}
\right.
\end{equation}
for $t\in(0,T),$ $s\in(-t,0)$ and $\xi\in\R^d$. If $DH\circ f$ and $\sigma_0(m_0,\cdot)$ are Lipschitz continuous, then \cite{pardoux1999forward} establishes the existence and uniqueness of an adapted solution $(x^{t,\xi},y^{t,\xi},z^{t,\xi})$ of \eqref{eq:FBSDE_vanishing}. Furthermore by \cite[Theorem 5.1]{pardoux1999forward}, the correspondence between viscosity solutions $\sigma$ to \eqref{eq:vanishing_common} and the solution to \eqref{eq:FBSDE_vanishing} is given by
$$\sigma(t,m_0,\xi)=y^{t,\xi}_t.$$

Now, let us consider the mean field game and perturb the trajectories of individual agents with a Brownian noise  $(B_\tau)_{\tau\in(-T,0)}$ (having intensity $\varepsilon>0$), acting in the direction of the mean of the measure component. For any initializing measure $m\in\sP_2(\R^d)$, 
using the stochastic Pontryagin principle (see \cite{carmona2013probabilistic}), one can characterize this MFG with the FBSDE system
\begin{equation}\label{mfg:fbsde_1}
\left\{
\begin{array}{ll}
X^{t,\zeta}_s &=\zeta- \ds\int_{-t}^s DH(Y^{t,\zeta}_\tau)\dif\tau + \sqrt{2\varepsilon}\int_{-t}^s\dif B_\tau,\\
Y^{t,\zeta}_s &=f(\sigma_0(\mathcal{L}_{\mathcal{F}^B_0}(X^{t,\zeta}_0))) + \ds\int_{s}^0 Z^{t,\zeta}_\tau\dif B_\tau,
\end{array}
\right.
\end{equation}
for $t\in(0,T),$ $s\in(-t,0)$ and $\zeta\in\bb{H}$ such that $\zeta_\sharp\bb{P}=m$. Let us recall that $\bb{H}$ stands for the space of $L^2$ random variables on a rich enough atomless probability space, which supports in particular $(B_\tau)_{\tau\in[-t,0]}$. Here $(\mathcal{F}^B_\tau)_{\tau\in[-t,0]}$ is the filtration generated by the common noise $(B_\tau)_{\tau\in[-T,0]}$ and $\mathcal{L}_{\mathcal{F}^B_\tau}$ stands for the conditional expectation with respect to ${\mathcal{F}^B_\tau}$.

{Let us emphasize that the stochastic forcing appearing in the FBSDE system \eqref{eq:FBSDE_vanishing}, which is linked to the transport equation, it is not directly related to the randomization of the agent trajectories in the mean field game. We can see this from the fact that the two systems \eqref{eq:FBSDE_vanishing} and \eqref{mfg:fbsde_1} are not same same. However, there is a precise relationship between these systems as we explain below.}

The global in time well-posedness of the MFG system \eqref{mfg:fbsde_1} is in general not known in the literature. This well-posedness has been established in the presence of non-degenerate idiosyncratic noise together with either the LL monotonicity condition on the data (cf. \cite{carmona2018probabilisticII}) or in the case of displacement monotone data and quadratic Hamiltonians in \cite{ahuja2016wellposedness}. In the case of LL monotone data but in the absence of non-degenerate idiosyncratic noise, the recent work \cite{cardaliaguet2022on} constructs suitable weak solutions to MFG systems. {In our case, in general the data does not possess the LL monotonicity condition.} We also notice that the approach taken in \cite{delarue2020selection} is not applicable in our case, as \eqref{mfg:fbsde_1} does not have a linear quadratic structure which was used in that work. 

However, because of the the structure of the problem, we can infer the well-posedness of \eqref{mfg:fbsde_1} from the \eqref{eq:FBSDE_vanishing}, {at least under further assumptions on the data}. Indeed, let us notice that since $Y^{t,\zeta}$ needs to be adapted to the filtration generated by $B$, the random variable $X^{t,\zeta}_0$ will be just a shift of $\zeta$ by a random vector. Therefore, the law $\mathcal{L}_{\mathcal{F}^B_0}(X^{t,\zeta}_0)$ will have a random component only in the direction of the mean of $m$. Thus, it is straightforward to see that if $f$ is { an affine function}, then \eqref{eq:FBSDE_vanishing} is well-posed if and only if \eqref{mfg:fbsde_1} is well-posed, in which case we have the correspondence $Y^{t,\zeta}_s=f(y^{t,\xi}_s),$ where the relationship between $\zeta$ and $\xi$ is given by $\zeta_\sharp \mathbb{P}=m=(m_0,\xi)$.

\subsection{The entropy solution is not always an equilibrium} In this subsection we suppose that $d=1$, and suppose that the notations from Subsection \ref{subsec:existence} are in place. As we have seen in Theorem \ref{thm:selection}, in the case of strictly convex flux function and regular initial data $\sigma_0$, the unique entropy solution $\sigma$ to \eqref{eq:conservation} always selects a Nash equilibrium for the mean field game. Now, our purpose in this subsection is to show that in the cases when the strict convexity of the flux function or the continuity of the initial data are violated, the entropy solution in general does not give a Nash equilibrium. 
\begin{comment}
Jameson:I have deleted this last sentence because I personally don't find this more striking than the other cases. In fact, I think it is far more interesting when there is at least one equilibrium, but the entropy solution does not select any of them. This seems to be the "warning" case, telling us that entropy solutions have a real limitation in their application to the game.
\end{comment}
	
\subsubsection{Discontinuous initial data, nonexistence of MFG Nash equilibria}

Let 
$$(DH\circ f)(r)=\bar f (r) = r,$$ 
{(which would correspond, for instance, to $H(p) = p^{2}/2$ and $f(r)=r$)}
so that for any $m_0$, \eqref{eq:conservation finite dim} corresponds to Burgers' equation. Consider the mean field game on the time interval $(0,T)$ with $m\in\sP_2(\R)$ as initializing measure. We represent $m$ via its decomposition $(m_0,x)$, where $x = \int_\R \xi \dif m(\xi)$. For this $m_0$ fixed, consider the Riemann problem associated to \eqref{eq:conservation finite dim} with initial datum 
$$\sigma_0(m_0,y)=
\left\{
\begin{array}{ll}
0, & {\rm{if}}\ y\le 0,\\
1, & {\rm{if}}\ y\ge 0.
\end{array}
\right.
$$
The unique entropy solution to \eqref{eq:conservation finite dim} is immediately given by 	
$$\sigma(t,m_0,y)=
\left\{
\begin{array}{ll}
0, & {\rm{if}}\ y\le 0,\\
y/t, & {\rm{if}}\ y\ge t,\\
1, & {\rm{if}}\ y< t.
\end{array}
\right.
$$	
We notice in particular that rarefaction waves have to be introduced in the time-space wedge 
$$W:=\{(t,y):\ 0<t<y\}.$$	
Let us recall that the Nash equilibrium to the mean field game is fully characterized by the relation \eqref{eq:displacement_of_mean}. Now consider $m=(m_0,x)$ such that $(T,x)\in W$. Clearly, this point is not on any of the characteristic lines (being in the region governed by rarefaction waves), and therefore for such initializing measures in the mean field game there will not be any solutions. Hence there are no Nash equilibria, yet the entropy solution to \eqref{eq:conservation finite dim} is well-defined.

\subsubsection{Discontinuous initial data, unique Nash equilibrium does not correspond to the entropy solution}

Now let us suppose that
$$(DH\circ f)(r)=\bar f (r) = r^2,$$
{(corresponding, for instance, to $H(p)=p^{2}/2$ and $f(r)=r^{2}$)}
and so \eqref{eq:conservation finite dim} corresponds to a conservation law with the non-convex flux function $\frac13 r^3$.  Using the decomposition $m=(m_0,x)$ for the initializing measure $m$, at time $T$, for $m_0$ fixed consider 
$$\sigma_0(m_0,y)=
\left\{
\begin{array}{ll}
-1, & {\rm{if}}\ y\le 0,\\
1, & {\rm{if}}\ y\ge 0.
\end{array}
\right.
$$
Since the flux function is not uniformly convex, it is well-known that Lax's entropy condition is not enough to determine the unique entropy solution. The right condition imposed on shocks to select the unique entropy solution is the so-called Oleinik entropy condition (cf. \cite[Chapter 8]{dafermos16})

For $m_0$ fixed, the unique entropy solution to \eqref{eq:conservation finite dim} is given by
\begin{equation}\label{eq:ent_sol_ex2}
\sigma(t,m_0,y)=
\left\{
\begin{array}{ll}
-1, & {\rm{if}}\ y\le t/4,\\
\sqrt{y/t}, & {\rm{if}}\ t/4\le y\le  t,\\
1, & {\rm{if}}\ y\ge t.
\end{array}
\right.
\end{equation}
As the wave speed in this case is given by the function $r\mapsto r^2$, and for our initial condition, $(-1)^2=1^2$, the information would travel in time along the parallel characteristics $y=s+t$, where $s\in\R$.

From the point of view of the MFG, we can compute the unique Nash equilibrium as follows: for any initializing measure $m\in\sP_2(\R)$ at time $T$, consider the decomposition $m=(m_0,x)$. To find the unique Nash equilibrium, it is enough to trace back the characteristic line passing through $(T,x)$, and evaluate the initial data at the base point. So this is fully characterized by $\sigma_0(m_0,x-T)$.
	
However, clearly if $m=(m_0,x)$ is such that $(T,x)$ belongs to the time-space wedge
$$W:=\{(t,y):\ t/4\le y\le  t\},$$
the entropy solution given in \eqref{eq:ent_sol_ex2} will not select the unique MFG Nash equilibrium.

\subsubsection{Smooth initial data, MFG Nash equilibrium does not correspond to the entropy solution}
The fact that entropy solutions to \eqref{eq:conservation finite dim} do not select MFG Nash equilibria is not necessarily the result of the discontinuity in the initial data $\sigma_0$. 

A general phenomenon that we can see from the definition of mean field game Nash equilibrium, i.e. from \eqref{eq:equil cond single valued}, is that when linked to the transport equation, equilibria correspond to characteristics. Therefore, if the entropy solution is such that it involves rarefaction waves, those regions can never correspond to Nash equilibria. As long as the flux function $F(r):=\int_0^r\bar f(s)\dif{s} = \int_0^r (DH\circ f)(s)\dif{s}$ is not uniformly convex, we expect this phenomenon may arise in general for continuous (or even smooth) initial data. Such results might be well-known for experts in conservation laws, but we could not locate a precise construction in the literature, for continuous initial datum. Therefore, we construct by hand such an example. As we will see below, in our construction it will be crucial that $F$ has two inflection points.

We set $F(r):=\frac{1}{12}r^4 - \frac{1}{2}r^2$ (see Figure \ref{fig:flux}). {This $F$ would for instance correspond to $H(p)=p^{2}/2$ and $f(r)=r^{3}-r$.}

Let us spend some time describing a potential initial data $\sigma_0(m_0,\cdot)$ which would produce the desired property.
(We will construct $\sigma_0(m_0,\cdot)$ having no dependence on $m_0$.)
Notice that $F$ has precisely two inflection points $r = \pm 1$.
On the interval $[-1,1]$, $F'(r) = \frac{1}{3}r^3 - r$ strictly decreases from $2/3$ to $-2/3$.
For $x \in [-2/3,2/3]$, therefore, we can select $\sigma_0(m_0,x)$ to be the unique $r \in [-1,1]$ such that $F'(r) = -x$.
Hence the characteristics originating from the interval $[-2/3,2/3]$ all meet at the point $(x=0,t=1)$.
In particular, note that $\sigma_0(m_0,\pm 2/3) = \pm 1$.
A shock $s_1(t)$ will form having initial speed of
\begin{equation*}
	\dot{s}_1(1) = \frac{F(\sigma_+) - F(\sigma_-)}{\sigma_+ - \sigma_-} = 0
\end{equation*}
because $\sigma_+ = 1$ and $\sigma_- = -1$ and $F$ is an even function.

Next, for $x \in [2/3,1]$ we choose $\sigma_0(m_0,x)$ so that it increases from 1 to $\sqrt{3}$, i.e.~so that the wave speed $F'\del{\sigma_0(m_0,x)}$ increases from $-2/3$ to 0.
Indeed, $F'$ is strictly increasing on the interval $[1,\sqrt{3}]$ into $[-2/3,0]$, so we can select $\sigma_0(m_0,x)$ to be the unique $r \in [1,\sqrt{3}]$ such that $F'(r) = 2x - 2 \in [-2/3,0]$.
In particular, the characteristics originating from a point $x_0 \in [2/3,1]$ fan out within the region between the axis $x = 1$ and the characteristic $t \mapsto \frac{2}{3}(1-t)$, having the form
\begin{equation*}
	x(t) = x_0 + t(2x_0 - 2), \quad x_0 \in [2/3,1]
\end{equation*}
and reaching the axis $x = 0$ at time $t_0 = \frac{x_0}{2-2x_0}$.
See Figure \ref{fig:characteristics}.

Now we consider $x \in [-1,-2/3]$.
We essentially want to mirror the characteristics that emanate from the interval $[2/3,1]$, so that the shock originating at $(x=0,t=1)$ will continue at a speed of zero.
However, it will in fact simplify the solution for later times if we only do this for $x \in [x^*,-2/3]$ for some $x^* \in (-1,-2/3)$ to be specified.
For now we only specify that $x^*$ will be close to $-1$.
Since $F'$ is strictly increasing on the interval $[-\sqrt{3},-1]$ into $[0,2/3]$, we select $\sigma_0(m_0,x)$ to be the unique $r \in [-\sqrt{3},-1]$ such that $F'(r) = 2x + 2 \in [0,2/3]$.
Note that $\sigma_0(m_0,x^*) \approx -\sqrt{3}$ and thus $F'(\sigma_0(m_0,x^*)) \approx 0$.
Mirroring the characteristics originating in $[2/3,1]$, the characteristics originating in $[x^*,-2/3]$ fan out between the axis $x = -1$ and the characteristic $t \mapsto \frac{2}{3}(t-1)$, having the form
\begin{equation*}
	x(t) = x_0 + t(2x_0 + 2), \quad x_0 \in [x^*,-2/3]
\end{equation*}
and reaching the axis $x=0$ at time $t_0 = -\frac{x_0}{2+2x_0}$.
See Figure \ref{fig:characteristics}.

For $x \leq x^*$, we will set $\sigma_0(m_0,x) = \sigma_0(m_0,x^*)$, so the slope of characteristics is constant (and close to zero) on the left-hand side of $x^*$.

Next consider $x \geq 1$.
We now reverse course and decrease $\sigma_0(m_0,x)$ from $\sqrt{3}$ back down to 1, so that the wave speed decreases from 0 to $-2/3$.
This can be done at any rate we like.
In particular, take some arbitrary $\xi > 1$, and then for $x \in [1,\xi]$ we take $\sigma_0(m_0,x)$ to be the unique $r \in [1,\sqrt{3}]$ such that $F'(r) = -\frac{2}{3}\frac{x-1}{\xi-1}$.
Then the characteristics emanating from this interval all intersect at the point $(x=1,t = t_\xi)$ where $t_\xi = \frac{3}{2}(\xi-1)$.
A shock $s_2(t)$ will form at this point with initial speed
\begin{equation*}
	\dot{s}_2(t_\xi) = \frac{F(\sigma_+)-F(\sigma_-)}{\sigma_+ - \sigma_-} = \frac{F(1) - F(\sqrt{3})}{1-\sqrt{3}} = -\frac{1}{3(\sqrt{3}-1)}.
\end{equation*}
We set $\sigma_0(m_0,x) = 1$ for all $x \geq \xi$, so that $\sigma_+ = 1$ at every point along the shock $s_2(t)$.
To the left of this shock we always have $1 \leq \sigma_- \leq \sqrt{3}$ because of how $\sigma_0(m_0,x)$ was chosen for $x \in [2/3,1]$.
It follows that
\begin{equation*}
	-\frac{2}{3} \leq \dot{s}_2(t) \leq -\frac{1}{3(\sqrt{3}-1)}
\end{equation*}
for all $t > t_\xi$ until shock $s_2(t)$ inevitably collides with $s_1(t) = 0$ at some time $t^* > t_\xi$.
Notice that the time $t^*$ does not depend on the choice of $\sigma_0(m_0,x)$ for any $x$ such that the characteristics $t \mapsto x + t(2x+2)$ do not intersect the shock $s_1(t)$ before $t^*$.
Hence we may choose $x^* \in (-1,-2/3)$ such that the characteristic $t \mapsto x^* + t(2x^* + 2)$ emanating from $x^*$ intersects both of these shocks at precisely the point $(0,t^*)$; namely, we set $x^* = -\frac{2t^*}{1+2t^*}$.
By picking $\xi$ large enough and therefore $t^* > t_\xi$ large enough, we can make $x^*$ arbitrarily close to $-1$.

Finally, for times $t > t^*$ we have a Riemann problem with initial function $\sigma(t^*,m_0,x)$ given by
\begin{equation*}
	\sigma(t^*,m_0,x) = \begin{cases}
		\sigma^* &\text{if}~x < 0,\\
		1 &\text{if}~x > 0
	\end{cases}
\end{equation*}
where $\sigma^* = \sigma_0(m_0,x^*)$.
If $\sigma^*$ is close enough to $-\sqrt{3}$, then we know a simple shock cannot occur, since the entropy condition is not satisfied (the line connecting $(\sigma_*,F(\sigma_*))$ to $(1,F(1))$ would not lie entirely underneath the graph of $F$).
Thus, a rarefaction wave forms.
Let $r_1$ be the least point in the interval $(-\sqrt{3},-1)$ such that $r = r_1$ solves the equation
\begin{equation*}
	\frac{F(1)-F(r)}{1-r} = F'(r).
\end{equation*}
To see that this equation has a solution in $(-\sqrt{3},-1)$, one has only to note that $F'(r) - \frac{F(1)-F(r)}{1-r}$ is negative if $r = -\sqrt{3}$ and positive if $r = -1$.
We may assume that $\sigma^* < r_1$, and therefore a rarefaction wave forms.
In particular, we have, for all $t > t^*$,
\begin{equation}
	\sigma(t,m_0,x) = \begin{cases}
		\sigma^* &\text{if}~x < F'(\sigma^*)(t-t^*),\\
		1 &\text{if}~x > F'(r_1)(t-t^*),
	\end{cases}
\end{equation}
and, for $F'(\sigma^*)(t-t^*) \leq x \leq F'(r_1)(t-t^*)$, $\sigma(t,m_0,x)$ is the unique $r \in [\sigma^*,r_1]$ such that $F'(r) = \frac{x}{t-t^*}$.
The shock $s_3(t) = F'(r_1)(t-t^*)$ satisfies the entropy condition, since $\sigma_- = r_1$ and $\sigma_+ = 1$, whereas $r_1$ was chosen precisely so that
\begin{equation*}
	\frac{F(1)-F(r_1)}{1-r_1} = F'(r_1) = \dot{s}_3(t).
\end{equation*}
All of this is explained by Figure \ref{fig:characteristics} (a picture is indeed worth 1000 words).

\begin{figure}[H]
\centering
\begin{minipage}{.5\textwidth}
  \centering
  \begin{tikzpicture}
  	\begin{axis}[axis lines=middle, samples=200]
  		\addplot[domain=-3:3]{x^4/12 - x^2/2};
  	\end{axis}
  \end{tikzpicture}	
  \captionof{figure}{The flux function $F(r)=\frac{1}{12} r^4-\frac{1}{2}r^2$}\label{fig:flux}
\end{minipage}%
\begin{minipage}{.5\textwidth}
  \centering
  \begin{tikzpicture}
  	\draw[->,gray] (-4,0) -- (4,0); 
  	\draw[->,gray] (0,0) -- (0,8); 
  	\foreach \i in {-.66,-.55,...,.66}
  	\draw (\i,0) -- (0,1);
  	\foreach \j in {.66,.68,...,.92}
  	{\pgfmathsetmacro{\k}{\j/(2-2*\j)}
  	\draw (\j,0) -- (0,\k);}
	\foreach \j in {1,.98,.96,.94}
	{\pgfmathsetmacro{\t}{3 + 625*(1-\j)^2}
	\pgfmathsetmacro{\k}{\j + \t*(2*\j - 2)}
	\draw (\j,0) -- (\k,\t);}
  	\foreach \j in {-.92,-.90,...,-.66}
  	{\pgfmathsetmacro{\k}{-\j/(2+2*\j)}
  		\draw (\j,0) -- (0,\k);}
  	\foreach \i in {1.2,1.4,...,4}
  	\draw (\i,0) -- (1,3);
  	\foreach \i in {2.1,2.4,...,5} 
  	{\pgfmathsetmacro{\j}{(\i-1.8)/3}
  		\pgfmathsetmacro{\k}{4+\i - \j}
  	\draw (4,\i) -- (\j,\k);
  	} 
  	\foreach \i in {5.1,5.4,...,7.6} 
  	{\pgfmathsetmacro{\k}{\i-3.8}
  		\draw (4,\i) -- (\k,7.8);
  	} 
  	\foreach \i in {0,.3,...,2.1} 
  	{\pgfmathsetmacro{\k}{\i^2/4}
  		\draw (4,\i) -- (1-\k,\i+3+\k);
  	}
  	\foreach \j in {-3.9,-3.7,...,-1}
  	{\pgfmathsetmacro{\k}{\j + 5.8*.2}
  		\draw (\j,0) -- (\k,7.8);}
  	\draw[red,ultra thick] (1,3) .. controls (.5,5) .. (0,5.8); 
  	\draw[red,ultra thick] (0,1) -- (0,5.8);
  	\draw[red,ultra thick] (0,5.8) -- (1,7.8);
  	\draw[orange,ultra thick] (0,5.8) -- (.3,7.8);
  	\foreach \i in {.4,.5,...,1}
  	\draw[orange] (0,5.8) -- (\i,7.8);
  \end{tikzpicture}	
  \captionof{figure}{Black: characteristics; red: shock; orange: rarefactions}\label{fig:characteristics}
\end{minipage}
\end{figure}

When considering an initializing measure $m=(m_0,x)$ at time $T$, such that $(T,x)$ belongs to the orange region, multiple Nash equilibria for the game could be computed by letting characteristics into the orange region and tracing them back to the initial data. However, \emph{none of these equilibria} correspond to the unique entropy solution to \eqref{eq:conservation finite dim}, as this is constructed via the orange rarefaction waves. 
Here we have an explicit construction of continuous initial data for which the entropy condition therefore selects none of the possible equilibria for a certain collection of initializing measures.
This phenomenon is already well-known in the literature on conservation laws.
We refer to \cite{bianchini2017on} for a fine description of the structure of entropy solutions.
When applied to a mean field game, this implies that the entropy solution may not always be a perfect candidate for a ``selection principle'' for games with multiple equilibria.

\begin{comment}

	Let $F$ be any smooth function such that $F(u) = 0$ for $u \leq 0$, $F(u) = 1$ for $u \geq 1$, and $F$ increases smoothly on $[0,1]$.
	Set $f = F'$.
	Now let $u_0(x) = 1$ for $x > 0$ and $u_0(x) = 0$ for $x < 0$.
	Consider the unique entropy solution of
	\begin{equation}
		u_t + F(u)_x = 0, \quad u(x,0) = u_0(x).
	\end{equation}
	In this case a shock forms due to characteristics in the half-plane $x > 0$ intersecting with a rarefaction wave emanating from the discontinuity at the origin.
	We know by the Rankine-Hugoniot formula that the shock $x = \gamma(t)$ must travel at a positive speed, hence there is an open region $0 < x < \gamma(t)$ in which the rarefaction occurs and we have $u(x,t) = g(x/t)$.
	Then $g$ must satisfy $f(g(s)) = s$ for all $s$ in the range of $f$.
	Since $g$ must be continuous and $g(0) = u(0,t) = 0$, it follows that $g(s) \leq u^* < 1$ where $u^*$ is the first location in $(0,1)$ of a local maximum of $f$.
	
	Consider a smooth approximation $u_{0,1}(x)$ of $u_0$ such that $u_{0,1}(x) = u_0(x)$ for $x \notin (-1,0)$ and $u_{0,1}$ increases smoothly on $(-1,0)$.
	There is a point $x^* \in (-1,0)$ at which $f\del{u_{0,1}(x)}$ reaches its maximum.
	Let $A$ be the region enclosed by the characteristic emanating from the point $(x^*,0)$, the shock $s(t)$ described above, and the line $t = 0$.
	
	Now let $0 < \varepsilon < 1$ and $u_{0,\varepsilon}(x) = u_{0,1}(x/\varepsilon)$.
	
\end{comment}

\medskip
\medskip

\noindent {\bf Acknowledgements.}	Both authors are grateful to Alberto Bressan and Elio Marconi for the discussions about the structure of entropy solutions for scalar conservation laws in the case of non-convex flux functions. PJG acknowledges the support of the National Science Foundation through NSF Grants DMS-2045027 and DMS-1905449. We acknowledge the support of the Heilbronn Institute for Mathematical Research and the UKRI/EPSRC Additional Funding Programme for Mathematical Sciences through a focused research grant ``The master equation in mean field games''.  ARM has also been partially supported by the EPSRC via the NIA with grant number EP/X020320/1	 and by the King Abdullah University of Science and Technology Research Funding (KRF) under Award No. ORA-2021-CRG10-4674.2.
	
\medskip

\noindent {\bf Conflict of interest.} The authors declare that they do not have any conflicts of interests.	

\medskip

\noindent {\bf Data Availability Statement.} Data sharing not applicable to this article as no datasets were generated or analyzed during the current study.
	
	\bibliographystyle{plain}	
	\bibliography{mybib}
\end{document}